\newtheorem{theorem}{Theorem}[section]
\newtheorem{lemma}[theorem]{Lemma}
\newtheorem{definition}[theorem]{Definition}
\def \E{\mathbb E}
\def \1{\mathbf 1}
\def \P{\mathrm{Pr}}
\def \S{\mathcal{S}}
\def \N{p}
\begin{document}

\title{Distilled Sensing: Adaptive Sampling for Sparse Detection and Estimation}

\author{Jarvis~Haupt, 
        Rui~Castro, 
        and~Robert~Nowak
\thanks{J.H. is with the Department
of Electrical and Computer Engineering, Rice University; R.C.
is with the Department of Electrical Engineering, Columbia
University; R.N. is with the Department of Electrical and
Computer Engineering, University of Wisconsin---Madison.
E-mail: {\tt jdhaupt@rice.edu}, {\tt rmcastro@ee.columbia.edu},
{\tt nowak@ece.wisc.edu}.}%
\thanks{This work was supported in part by NSF Grant
CCF-0353079 and AFOSR Grant FA9550-09-1-0140, and is dedicated
to the memory of Dr. Dennis Healy, who inspired and supported
this direction of research in the context of the DARPA
Analog-to-Information Program. Dennis' guidance, vision, and
inspiration will be missed.}
\thanks{Manuscript completed January 28, 2010;
revised May 27, 2010.}
\thanks{A preliminary version of this report
will appear at the International Symposium on Information
Theory in Austin, TX in June 2010.}%
}

\maketitle

\begin{abstract}
  \sloppypar Adaptive sampling results in dramatic improvements
  in the recovery of sparse signals in white Gaussian noise.  A
  sequential adaptive sampling-and-refinement procedure called
  \emph{Distilled Sensing} (DS) is proposed and analyzed.  DS
  is a form of multi-stage experimental design and testing.
  Because of the adaptive nature of the data collection, DS can
  detect and localize far weaker signals than possible from
  non-adaptive measurements.  In particular, reliable detection
  and localization (support estimation) using non-adaptive
  samples is possible only if the signal amplitudes grow
  logarithmically with the problem dimension. Here it is shown
  that using adaptive sampling, reliable detection is possible
  provided the amplitude exceeds a constant, and localization
  is possible when the amplitude exceeds any arbitrarily slowly
  growing function of the dimension.
\end{abstract}

\section{Introduction}\label{sec:intro}

In high dimensional multiple hypothesis testing problems the
aim is to identify the subset of the hypotheses that differ
from the null distribution, or simply to decide if one or more
of the hypotheses do not follow the null.  There is now a well
developed theory and methodology for this problem, and the
fundamental limitations in the high dimensional setting are
quite clear.  However, most existing treatments of the problem
assume a non-adaptive measurement process.  The question of how
the limitations might differ under a more flexible, sequential
adaptive measurement process has not been addressed.  This
paper shows that this additional flexibility can yield
surprising and dramatic performance gains.

For concreteness let $x = (x_1,\dots,x_{\N})
\in\mathbb{R}^{\N}$ be an unknown sparse vector, such that most
(or all) of its components $x_i$ are equal to zero. The
locations of the non-zero components are arbitrary. This vector
is observed in additive white Gaussian noise and we consider
two problems:
\begin{description}
\item {\em Localization}: Infer the locations of the few non-zero components.
\item {\em Detection}: Decide whether $x$ is the all-zero
    vector.
\end{description}
Given a single, non-adaptive noisy measurement of $x$, a common
approach entails coordinate-wise thresholding of the observed
data at a given level, identifying the number and locations of
entries for which the corresponding observation exceeds the
threshold. In such settings there are sharp asymptotic
thresholds that the magnitude of the non-zero components must
exceed in order for the signal to be localizable and/or
detectable.  Such characterizations have been given in various
contexts in \cite{abramovich:06,donoho:06,genovese:09} for the
localization problem and \cite{ingster:97,
ingster:98,donoho:04} for the detection problem. A more
thorough review of these sorts of characterizations is given in
Section~\ref{sec:non_adaptive}.

In this paper we investigate these problems under a more
flexible measurement process.  Suppose we are able to
sequentially collect multiple noisy measurements of each
component of $x$, and that the data so obtained can be modeled
as
\begin{equation}\label{eqn:gen_model}
y_{i,j} \ = \  x_i \ + \ \gamma_{i,j}^{-1/2} \, w_{i,j}, \ i=1,\dots,{\N}, \ \ j=1,\dots,k\ .
\end{equation}
In the above a total of $k$ measurement steps is taken, $j$
indexes the measurement step, $w_{i,j} \overset{\rm
i.i.d.}{\sim} \mathcal{N}(0,1)$ are zero-mean Gaussian random
variables with unit variance, and $\gamma_{i,j}\geq 0$
quantifies the precision of each measurement.  When
$\gamma_{i,j} = 0$ we adopt the convention that component $x_i$
was not observed at step $j$.  The crucial feature of this
model is that it does not preclude sequentially adaptive
measurements, where the $\gamma_{i,j}$ can depend on past
observations $\{y_{i,\ell}\}_{i\in\{1,\ldots,\N\}, \ell < j}$.

In practice, the precision for a measurement at location $i$ at
step $j$ may be controlled, for example, by collecting multiple
independent samples and averaging to reduce the effective
observation noise, the result of which would be an observation
described by the model \eqref{eqn:gen_model}. In this case, the
parameters $\{\gamma_{i,j}\}$ can be thought of as proportional
to the number of samples collected at location $i$ at step $j$.
For exposure-based sampling modalities common in many imaging
scenarios, the precision parameters $\{\gamma_{i,j}\}$ can be
interpreted as being proportional to the length of time for
which the component at location $i$ is observed at step $j$.

In order to make fair comparisons to non-adaptive measurement
processes, the total precision budget is limited in the
following way.  Let $R({\N})$ be an increasing function of
${\N}$, the dimension of the problem (that is, the number of
hypotheses under scrutiny). The precision parameters
$\{\gamma_{i,j}\}$ are required to satisfy
\begin{equation}\label{eqn:budget}
\sum_{j=1}^k \sum_{i=1}^p \gamma_{i,j} \ \leq \ R({\N}) \, .
\end{equation}
For example, the usual non-adaptive, single measurement model
corresponds to taking $R({\N}) = {\N}$, $k =1$, and
$\gamma_{i,1}=1$ for $i=1,\dots,{\N}$.  This baseline can be
compared with adaptive procedures by keeping $R({\N})={\N}$,
but allowing $k>1$ and variables $\{\gamma_{i,j}\}$ satisfying
(\ref{eqn:budget}).

The multiple measurement process (\ref{eqn:gen_model}) is
applicable in many interesting and relevant scenarios.  For
example in gene association and expression studies, two-stage
approaches are gaining popularity (see
\cite{muller:07,zehetmayer:05,satagopan:03} and references
therein): in the first stage a large number of genes is
initially tested to identify a promising subset of them, and in
the second-stage these promising genes are subject to further
testing.  Such ideas have been extended to multiple-stage
approaches; see, for example \cite{zehetmayer:08}. Similar
two-stage approaches have also been examined in the signal
processing literature–--see \cite{bashan:08}. More broadly,
sequential experimental design has been popular in other fields
as well, such as in computer vision where it is known as
\emph{active vision} \cite{NSF:91}, or in machine learning,
where it is known as \emph{active learning}
\cite{cohn:94,cohn:96}. These types of procedures can
potentially impact other areas such as microarray-based studies
and astronomical surveying.

The main contribution of this paper is a theoretical analysis
that reveals the dramatic gains that can be attained using such
sequential procedures. Our focus here is on a particular
sequential, adaptive sampling procedure called \emph{Distilled
Sensing} (DS). The idea behind DS is simple: use a portion of
the precision budget to crudely measure all components;
eliminate a fraction of the components that appear least
promising from further consideration after this measurement;
and iterate this procedure several times, at each step
measuring only components retained after the previous step. As
mentioned above, similar procedures have been proposed in the
context of experimental design, however to the best of our
knowledge the quantification of performance gains had not been
established prior to our own initial work in \cite{haupt:08b,
haupt:09a} and the results established in this paper. In this
manuscript we significantly extend our previous results by
providing stronger results for the localization problem, and an
entirely novel characterization of the detection problem.

This paper is organized as follows. Following a brief review of
the fundamental limits of non-adaptive sampling for detection
and localization in Section~\ref{sec:non_adaptive}, our main
result---that DS can reliably solve the localization and
detection problems for dramatically weaker signals than what is
possible using non-adaptive measurements---is stated in
Section~\ref{sec:ds_result}. A proof of the main result is
given in Section~\ref{sec:ds_proof}. Simulation results
demonstrating the theory are provided in Section~\ref{sec:exp},
and conclusions and extensions are discussed in
Section~\ref{sec:conclusion}. A proof of the threshold for
localization from non-adaptive measurements and several
auxiliary lemmas are provided in the appendices.

\section{Review of Non-adaptive Localization and Detection of Sparse
Signals}\label{sec:non_adaptive}

In this section we review the known thresholds for localization
and detection from non-adaptive measurements. As mentioned
above, such thresholds have been established in a variety of
problem settings \cite{abramovich:06,donoho:06,
genovese:09,ingster:97,ingster:98, donoho:04}. Here we provide
a concise summary of the main ideas along with supporting
proofs as needed, to facilitate comparison with our main
results concerning recovery from adaptive measurements which
appear in the next section.

The non-adaptive measurement model we will consider as the
baseline for comparison is as follows.  We have a single
observation of $x$ in noise:
\begin{equation}\label{eqn:na_model}
  y_i \ = \  x_i \ + \  w_i, \ i=1,\dots,{\N} \, ,
\end{equation}
where $w_i \overset{\rm i.i.d.}\sim {\cal N}(0,1)$.  As noted
above, this is a special case of our general setup
(\ref{eqn:gen_model}) in which $k=1$ and $\gamma_{i,1} = 1$ for
$i=1,\dots,{\N}$.  This implies a precision budget $R({\N}) =
\sum_{i=1}^{\N} \gamma_{i,1} = {\N}$.

To describe the asymptotic (large ${\N}$) thresholds for
localization we need to introduce some notation.  Define the
{\em false-discovery proportion} (FDP) and {\em non-discovery
proportion} (NDP) as follows.
\begin{definition}
  Let ${\cal S} := \{i \, : \, x_i \neq 0 \}$ denote the signal support set and let
  $\widehat{\cal S}=\widehat{\cal S}(y)$ denote an estimator of
  ${\cal S}$.  The false-discovery proportion is
  \begin{equation*}\label{eqn:fdp}
    \mbox{\rm FDP}(\widehat{\cal S}) \ := \
    \frac{|\widehat{\cal S} \backslash {\cal S}|}{|\widehat{\cal S}|} \ .
  \end{equation*}
  In words, the $\mbox{\rm FDP}$ of $\widehat{\cal S}$ is the
  ratio of the number of components falsely declared as
  non-zero to the total number of components declared non-zero.
  The non-discovery proportion is
  \begin{equation*}\label{eqn:ndp}
    \mbox{\rm NDP}(\widehat{\cal S}) \ := \ \frac{|S \backslash \widehat{\cal S}|}{|S|} \ .
  \end{equation*}
  In words, the $\mbox{\rm NDP}$ of $\widehat{\cal S}$ is the
  ratio of the number of non-zero components missed to the
  number of actual non-zero components.
\end{definition}

In this paper we focus in particular on the scenario where
$x_i\geq 0$ for all $i\in\{1,\ldots,\N\}$. We elaborate on
possible extensions in Section~\ref{sec:ds_result}. Under this
assumption it is quite natural to focus on a specific class of
estimators of ${\cal S}$.
\begin{definition}
  A {\em coordinate-wise thresholding procedure} is an
  estimator of the following form:
  \begin{equation*}
    \widehat{\cal S}_\tau(y) \ := \ \left\{i \in \{1,\dots,{\N}\}: y_i \geq \tau >0\right\} \ ,
  \end{equation*}
  where the threshold $\tau$ may depend implicitly on $x$, or
  on $y$ itself.
\end{definition}

The following result establishes the limits of localization
using non-adaptive sampling. A proof is provided in
Appendix~\ref{sec:naloc}.
\begin{theorem}\label{thm:naloc}
  Assume $x$ has ${\N}^{1-\beta}$, $\beta \in (0,1)$,
  non-zero components of amplitude $\sqrt{2r\log {\N}}$, $r>0$,
  and measurement model (\ref{eqn:na_model}).  There exists a
  coordinate-wise thresholding procedure that yields an
  estimator $\widehat{\cal S}=\widehat{\cal S}(y)$ such that if $r>\beta$, then
  as $\N \rightarrow\infty$,
  \begin{eqnarray*}
      \mbox{\rm FDP}(\widehat{\cal S})  & \overset{P}\rightarrow &
      0 \, ,  \ \ \
      \mbox{\rm NDP}(\widehat{\cal S}) \ \overset{P}\rightarrow\
      0 \ ,
  \end{eqnarray*}
  where $\stackrel{P}{\rightarrow}$ denotes convergence in
  probability.  Moreover, if $r < \beta$, then there does not
  exist a coordinate-wise thresholding procedure that can
  guarantee that both quantities above tend to $0$ as
  ${\N}\rightarrow \infty$.
\end{theorem}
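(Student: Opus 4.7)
The plan is to prove achievability and the converse separately, both driven by the same asymptotic relationship between a coordinate-wise threshold $\tau = \sqrt{2 s \log p}$, the Gaussian tail $\bar{\Phi}(\tau) \asymp p^{-s}/\sqrt{\log p}$, and the signal-detection probability $\bar{\Phi}(\tau - \mu)$, where $\mu = \sqrt{2 r \log p}$; the latter tends to $1$ when $s<r$ and to $0$ when $s>r$. These give expected numbers of noise-only and signal exceedances of order $p^{1-s}$ and $p^{1-\beta}$ respectively, so the key relation needed to make false discoveries negligible compared to true discoveries is $s > \beta$, and such an $s \in (\beta, r)$ exists if and only if $r > \beta$.

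For achievability ($r > \beta$), I would fix any $s \in (\beta, r)$ and use the deterministic threshold $\tau_p = \sqrt{2 s \log p}$. Writing $N_F = |\widehat{\cal S}_{\tau_p} \setminus {\cal S}|$ and $N_T = |\widehat{\cal S}_{\tau_p} \cap {\cal S}|$ as sums of independent Bernoullis, Markov's inequality bounds $N_F \leq 2\,\E[N_F] = O(p^{1-s}/\sqrt{\log p})$ with high probability, while Chebyshev's inequality gives $N_T/|{\cal S}| \to 1$ in probability because $\mu - \tau_p = \sqrt{2\log p}(\sqrt{r}-\sqrt{s}) \to \infty$ forces each per-coordinate detection probability to $1$. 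Together these yield $\mbox{\rm FDP} \leq N_F/N_T = O(p^{\beta - s}/\sqrt{\log p}) \to 0$ and $\mbox{\rm NDP} = 1 - N_T/|{\cal S}| \to 0$.

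For the converse ($r < \beta$), the key observation is that the family $\{\widehat{\cal S}_\tau\}_{\tau>0}$ is nested in $\tau$, so I would discretize the candidate thresholds onto a grid $\tau \in \{\sqrt{2 s \log p} : s \in \delta\, \mathbb{Z}_{\geq 0}\}$ with $\delta = \delta(p) \to 0$ slowly, apply the same second-moment arguments at each grid value, and use monotonicity of $N_F(\tau)$ and $N_T(\tau)$ to fill in the gaps between grid points. If $s < \beta$ the noise exceedances (of order $p^{1-s}$) dominate the at-most $p^{1-\beta}$ signal exceedances and $\mbox{\rm FDP} \to 1$; if $s > r$ virtually no signal coordinates survive and $\mbox{\rm NDP} \to 1$. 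Since $r < \beta$, every $s$ lies in one of these two bad regimes, and a union bound over the grid rules out all data-dependent thresholds simultaneously.

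This last step is the principal obstacle, since for a purely deterministic threshold a single second-moment calculation suffices and the achievability direction is essentially routine. The resolution is the one-dimensional nested structure of coordinate-wise thresholding rules, which reduces the analysis of an arbitrary $y$-dependent threshold to a uniform control problem over a one-parameter family --- something the monotonicity plus grid argument handles cleanly.
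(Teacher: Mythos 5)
Your achievability argument is the paper's own: a deterministic threshold $\sqrt{2s\log \N}$ with $s$ strictly between $\beta$ and $r$, binomial counts of false and missed discoveries, and first/second moment bounds (the paper uses $\alpha\in(\beta,r)$ and Markov's inequality on both counts; the substance is identical). For the converse the underlying idea is also the same --- exploit the nested, monotone structure of $\{\widehat{\cal S}_\tau\}$ so that data-dependent thresholds reduce to controlling fixed ones --- but your execution differs: you propose a shrinking grid of thresholds with a union bound, whereas the paper works at the \emph{single} critical threshold $\tau^*=\sqrt{2r\log\N}$, showing there that $\mathrm{NDP}\to 1/2$ (so controlling NDP forces $\tau<\tau^*$) and that $D_z/\N^{1-\beta}$ does not vanish (so controlling FDP forces $\tau>\tau^*$), a contradiction. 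Your own monotonicity observation already makes the grid unnecessary: one comparison point $s^*\in(r,\beta)$ suffices, since for $\tau\le\tau^*$ one has $N_F(\tau)\ge N_F(\tau^*)\gg \N^{1-\beta}\ge N_T(\tau)$ with high probability, and for $\tau\ge\tau^*$ one has $N_T(\tau)\le N_T(\tau^*)=o_P(\N^{1-\beta})$. One point in your sketch to tighten if you keep the grid: the blanket claim ``$s<\beta$ implies $\mathrm{FDP}\to 1$'' fails uniformly for grid values $s$ within $O(1/\log \N)$ of $\beta$ (the ratio $\N^{\beta-s}$ need not diverge); those values must instead be absorbed into the $s>r$ (NDP) regime, which works only because $\beta-r$ is a fixed positive gap, so the split between the two bad regimes should be made at a fixed $s^*\in(r,\beta)$ rather than at $\beta$ itself. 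With that fix (or with the single-threshold shortcut), your argument is sound and, if anything, more explicit than the paper about handling $y$-dependent thresholds uniformly.
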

We also refer the reader to recent related work in
\cite{genovese:09}, which considered localization under similar
error metrics as those utilized here. There it was shown, using
a random signal model and assuming observations in the form of
noisy independent random (Gaussian) linear combinations of the
entries of $x$, that similar sharp asymptotics hold for any
recovery procedure \cite[Thm.~5]{genovese:09}.

Random signal models have also been adopted in the examination
of the fundamental limits of signal detection \cite{ingster:97,
ingster:98, donoho:04}. In particular, suppose that $x$ is such
that its entries $x_i$ have amplitude $\mu(p)=\sqrt{2 r\log
{\N}}$ independently with probability
$\theta(p)={\N}^{-\beta}$, and amplitude zero with probability
$1-\theta(p)$. The problem of signal detection from noisy
observations collected according to the measurement model
\eqref{eqn:na_model} amounts to a hypothesis test of the form:
\begin{eqnarray} \label{eqn:det}
  {\rm H}_0 & : & y_i \ \overset{\rm iid}{\sim} \ {\cal N}(0,1), \ i=1,\dots,{\N} \nonumber \\
  {\rm H}_1 & : & y_i \ \overset{\rm iid}{\sim} \ (1-\theta({\N})) \,
  {\cal N}(0,1)+\theta({\N}) \, {\cal N}(\mu({\N}),1), \
  i=1,\dots,{\N}
\end{eqnarray}
Note that under the alternative hypothesis, the signal has
${\N}^{1-\beta}$ non-zero components in expectation. We recall
the following result \cite{ingster:97,ingster:98,donoho:04}.
\begin{theorem} \label{thm:nadet}
  Consider the hypotheses in (\ref{eqn:det}) where $\mu(\N)=\sqrt{2r\log \N}$. Define
  \begin{eqnarray*}\label{eqn:hct}
    \rho(\beta) & := & \left\{\begin{array}{ll}
    0, & 0 < \beta \leq 1/2 \\
    \ \vspace{-.1in}  \\
    \beta-1/2, & 1/2 < \beta \leq 3/4 \\
    \ \vspace{-.1in}  \\
    (1-\sqrt{1-\beta})^2, & 3/4 < \beta < 1
    \end{array}\right.
  \end{eqnarray*}
  If $r>\rho(\beta)$, then there exists a test for which the
  sum of the false alarm and miss probabilities
  tends to $0$ as ${\N}\rightarrow \infty$. Conversely, if $r <\rho(\beta)$,
  then for any test the sum of the false alarm and miss probabilities
  tends to $1$ as ${\N}\rightarrow \infty$.
\end{theorem}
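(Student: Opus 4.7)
The plan is to establish the achievability ($r>\rho(\beta)$) and converse ($r<\rho(\beta)$) directions separately, following the classical argument of Ingster and of Donoho--Jin.

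For achievability, I would exhibit an explicit test tailored to each of the three ranges of $\beta$. In the dense regime $\beta\in(0,1/2]$, where $\rho(\beta)=0$, the number of non-zero components is of order $\N^{1-\beta}\geq\sqrt \N$ in expectation, so a simple linear statistic $T=\sum_{i=1}^{\N} y_i$ suffices: under $H_0$ one has $T\sim\mathcal N(0,\N)$, while under $H_1$ the mean shifts by order $\N^{1-\beta}\sqrt{\log \N}$ with variance at most $2\N$, and the standardized test works whenever $r>0$. For the moderately sparse regime $\beta\in(1/2,3/4]$, I would use a thresholded count $T_t=\sum_{i=1}^{\N}\1\{y_i\geq t\}$ with $t=\sqrt{2s\log \N}$; for a suitable choice of $s=s(r,\beta)$, standard Gaussian tail estimates show that $T_t$ separates the hypotheses as soon as $r>\beta-1/2$ (higher criticism achieves the same boundary without prior knowledge of $s$). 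In the very sparse regime $\beta\in(3/4,1)$, the maximum $M=\max_i y_i$ does the job: under $H_0$, $M\leq\sqrt{2\log \N}\,(1+o(1))$ with high probability, while under $H_1$ at least one signal entry pushes $M$ above this level once $r>(1-\sqrt{1-\beta})^2$; a careful Mills-ratio calculation shows the threshold is sharp.

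For the converse, I would use the second-moment method. The test (\ref{eqn:det}) is between two product measures on $\R^{\N}$, and for the uniform prior on $\{H_0,H_1\}$ the Bayes risk satisfies $\mathrm{Risk}\geq 1-\tfrac12\|P_0-P_1\|_{\mathrm{TV}}$, with $\|P_0-P_1\|_{\mathrm{TV}}^2\leq\tfrac14(\E_0[L^2]-1)$ for the likelihood ratio $L$. The product structure gives
\begin{equation*}
  \E_0[L^2]\ =\ \bigl(1+\theta(\N)^2\,(e^{\mu(\N)^2}-1)\bigr)^{\N},
\end{equation*}
and substituting $\theta=\N^{-\beta}$ and $\mu^2=2r\log \N$ shows that this expression tends to $1$ precisely when $r<\beta-1/2$; this yields the converse in the moderately sparse regime. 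For $\beta>3/4$ the pure second moment diverges past the true boundary, so I would apply the standard truncation of Ingster, replacing $L$ by the likelihood ratio of the measures restricted to $\{\max_i y_i\leq\sqrt{2\log \N}\}$. Under $H_0$ this event has probability $1-o(1)$, and redoing the second-moment computation with the truncated likelihood yields the sharper boundary $r<(1-\sqrt{1-\beta})^2$.

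The main obstacle is the converse in the very sparse regime $\beta>3/4$: the naive chi-square bound is not tight because rare large values of $y_i$ contribute enormous likelihood ratios that inflate the second moment well beyond the true detection boundary, and a careful truncation---together with a verification that the $H_0$ mass of the truncation event is preserved up to $o(1)$---is required to recover the correct exponent. The dense and moderately sparse cases, as well as all three achievability constructions, reduce to standard Gaussian tail estimates and are comparatively routine.
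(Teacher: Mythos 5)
The paper does not actually prove Theorem~\ref{thm:nadet}; it is recalled from the literature with citations to Ingster and Donoho--Jin, so there is no internal proof to compare against, and your sketch follows exactly the standard argument of those cited works (linear statistic, thresholded count or higher criticism, and max statistic for achievability in the three regimes; second-moment and truncated second-moment bounds on the likelihood ratio for the converse) and is correct. You also correctly isolate the one genuinely delicate step, namely the truncated second-moment computation required to push the converse from $\beta-1/2$ up to the true boundary $(1-\sqrt{1-\beta})^2$ when $\beta>3/4$.
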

It is possible to relate these detection results to the
deterministic sparsity model that we consider here, using the
ideas presented in \cite[Chapter 8]{ingster:03}.

\section{Main Results: Adaptive Localization and Detection of Sparse
Signals}\label{sec:ds_result}

In this section we present the main results of our theoretical
analysis of Distilled Sensing (DS). Algorithm~\ref{alg:DS}
describes the DS measurement process. At each step of the
process, we retain only the components with non-negative
observations.  This means that when the number of non-zero
components is very small, roughly half of the components are
eliminated from further consideration at each step.
Consequently, if the precision budget allocated at each step is
slightly larger than $1/2$ of that used in the preceding step,
then the effective precision of the measurements made at each
step is increasing. In particular, if the budget for each step
is $1/2+c$ of the budget at the previous step, for some small
constant $c>0$, then the precision of the measured components
is increasing exponentially. Therefore, the key is to show that
the very crude thresholding at $0$ at each step does not remove
a significant number of the non-zero components.  One final
observation is that because the number of components measured
decreases by a factor of roughly $1/2$ at each step, the total
number of measurements made by DS is roughly $2{\N}$, a modest
increase relative to the ${\N}$ measurements made in the
non-adaptive setting.

Recall from above that for non-adaptive sampling, reliable
detection and localization is only possible provided the signal
amplitude is $\Omega(\sqrt{\log(\N)})$. In other words, the
signal amplitude must exceed a constant (that depends on the
sparsity level) times $\sqrt{\log(\N)}$. The following theorem
establishes that DS is capable of detecting and localizing much
weaker sparse signals. For the purposes of our investigation we
assume that the non-zero components are positive.  It is
trivial to extend the algorithm and its analysis to handle both
positive and negative components by simply repeating the entire
process twice; once as described, and again with $y_{i,j}$
replaced with $-y_{i,j}$ in the refinement step of
Algorithm~\ref{alg:DS}.

\begin{algorithm}[t]
\vspace{.05in} \KwIn{} \Indp Number of observation steps: $k$\;

Resource allocation sequence satisfying $\sum_{j=1}^k R_{j}
  \leq R({\N})$\; \Indm
  \BlankLine
  \KwInit{}\\
  \Indp

  Initial index set: $I_{1} \longleftarrow
  \{1,2,\dots,{\N}\}$\; \Indm
  \BlankLine
  \KwDistill{}\\
  \Indp
  \SetAlgoNoLine
  \For{$j = 1$ \emph{\KwTo} $k$}{ Allocate resources:
  $\gamma_{i,j} =
    \left\{\begin{array}{cl} R_{j}/|I_{j}| &
    i\in I_{j}\\
    0 & i \notin I_{j} \end{array}\right\}$\;
    \ \\
    \ \\
    Observe: $y_{i,j} = x_i \, + \gamma_{i,j}^{-1/2}w_{i,j}, \
  i \in I_{j}$\;
    \ \\
    \ \\
    Refine: $I_{j+1} \longleftarrow \{i\in I_{j}: y_{i,j}
    >0 \}$\; }

    \Indm
  \BlankLine
  \KwOut{}
  \Indp

  Final index set: $I_{k}$\;

  Distilled observations: $y_{k} = \{y_{i,k}: i\in I_{k}\}$\;

  \Indm

  \caption{Distilled Sensing.}
  \label{alg:DS}
\end{algorithm}

\begin{theorem}\label{thm:ds}
  Assume $x\geq0$ with ${\N}^{1-\beta}$, $\beta \in (0,1)$,
  non-zero components of amplitude $\mu({\N})$, and sequential
  measurement model using Distilled Sensing with $k = k({\N}) =
  \max\{\lceil \log_2\log {\N} \rceil,0\}+2$, and precision
  budget distributed over the measurement steps so that
  $\sum_{j=1}^k R_j \leq {\N}$, $R_{j+1}/R_j \geq \delta >
  1/2$, and $R_1=c_1 {\N}$ and $R_k = c_k \,{\N}$ for some
  $c_1,c_k \in (0,1)$. Then the support set estimator
  constructed using the output of the DS algorithm
  \begin{eqnarray*}
    \widehat{\cal S}_{\rm DS} & := & \{i\in I_{k}: y_{i,k}>\sqrt{2/c_k}\}
  \end{eqnarray*}
  has the following properties:
  \begin{description}
    \item (i) if $\mu({\N})\rightarrow \infty$ as a
        function of $\N$, then as ${\N}\rightarrow \infty$
    \begin{eqnarray*}
      \mbox{\rm FDP}(\widehat{\cal S}_{\rm DS})  & \overset{P}\rightarrow &
      0 \, ,  \ \ \
      \mbox{\rm NDP}(\widehat{\cal S}_{\rm DS}) \ \overset{P}\rightarrow\
      0 \ ,
    \end{eqnarray*}
    \item (ii) if $\mu({\N}) >
        \max\left\{\sqrt{4/c_1},2\sqrt{2/c_k}\right\}$ (a
        constant) then
    \begin{eqnarray*}
      \lim_{{\N}\rightarrow \infty} \P(\widehat{\cal S}_{\rm DS} = \emptyset)
      & = & \left\{\begin{array}{cc} 1\, , & \mbox{if $x=0$} \\
      0 \, , & \mbox{if $x\neq 0$}
      \end{array}\right.,
    \end{eqnarray*}
    where $\emptyset$ is the empty set.
  \end{description}
\end{theorem}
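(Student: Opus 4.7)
The plan is to analyze the Distilled Sensing algorithm in three phases --- noise decimation, signal preservation, and final thresholding --- all conducted conditionally on a ``good'' event that controls the noise population through the distillation rounds. Let $\mathcal{S} = \{i : x_i > 0\}$, $s = |\mathcal{S}| = p^{1-\beta}$, and write $N_j := |I_j \setminus \mathcal{S}|$ and $S_j := |I_j \cap \mathcal{S}|$. For every null index $i \notin \mathcal{S}$ we have $x_i = 0$, so $y_{i,j}$ is a zero-mean Gaussian and the refinement keeps $i$ with probability exactly $1/2$, independently across $i$ and $j$. Hence $N_{j+1}\mid I_j \sim \mathrm{Bin}(N_j, 1/2)$ and, by Chernoff plus a union bound over the $k = O(\log\log p)$ stages, with probability tending to one
\[
N_j \;\le\; (1+o(1))\,(p-s)/2^{j-1} \qquad \text{for all } j = 1,\dots,k.
\]
Call this event $\mathcal{E}$ and condition on it throughout. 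Because $k = \lceil \log_2\log p\rceil + 2$ implies $p/2^{j-1} \ge p/(2\log p)$, which dominates $S_j \le p^{1-\beta}$ for any $\beta > 0$, we also get $|I_j| \le (1+o(1))\,p/2^{j-1}$ on $\mathcal{E}$.

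On $\mathcal{E}$, combining the lower bound $R_j \ge R_1 \delta^{j-1} = c_1\,p\,\delta^{j-1}$ (iterating $R_{j+1}/R_j \ge \delta$) with the upper bound on $|I_j|$ gives the per-step precision lower bound
\[
\gamma_{i,j} \;=\; R_j/|I_j| \;\ge\; (1-o(1))\,c_1\,(2\delta)^{j-1},
\]
which grows geometrically since $2\delta > 1$; at the terminal stage the sharper direct estimate using $R_k = c_k p$ yields $\gamma_{i,k} \ge (2-o(1))\,c_k\log p$ (via $2^{k-1}\ge 2\log p$). For a signal at $i \in \mathcal{S}\cap I_j$ the conditional survival probability is $q_j := \Phi(\mu\sqrt{\gamma_{i,j}})$, and the Gaussian tail bound gives
\[
1-q_j \;\le\; \tfrac{1}{2}\exp\!\bigl(-\tfrac{\mu^2 c_1}{2}(2\delta)^{j-1}\bigr).
\]
Under part~(i), $\mu(p)\to\infty$ drives the step-$1$ loss (and hence all later ones) to zero, so $\prod_j q_j \to 1$ and $S_k = (1-o_P(1))\,s$. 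Under part~(ii), $\mu^2 > 4/c_1$ makes the step-$1$ exponent strictly exceed $2$; summing the super-geometric sequence yields a finite constant $M(\mu,c_1,\delta)$, and since each $q_j \ge \Phi(2) > 1/2$ one has $\prod_j q_j \ge \exp\!\bigl(-2\sum_j (1-q_j)\bigr) \ge e^{-2M} =: q^\star > 0$, independent of $p$; conditional independence of signal survivals then yields, via Chernoff, $S_k \ge \tfrac{1}{2}q^\star s \to \infty$ with probability tending to one.

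The final thresholding is now a clean check at the cutoff $\sqrt{2/c_k}$. A null reaching $I_k$ triggers a false discovery with probability $1-\Phi(\sqrt{2/c_k}\sqrt{\gamma_{i,k}}) \le \tfrac{1}{2}\,p^{-2+o(1)}$, so the expected number of false positives is at most $N_k\cdot p^{-2+o(1)} = o(1)$, and w.h.p.\ no nulls are declared. A signal reaching $I_k$ is declared with probability $\Phi(\sqrt{\gamma_{i,k}}(\mu - \sqrt{2/c_k}))$, which tends to $1$ under (i) (since eventually $\mu-\sqrt{2/c_k}$ is bounded below and $\sqrt{\gamma_{i,k}} \to \infty$) and also under (ii), where $\mu > 2\sqrt{2/c_k}$ forces $\mu-\sqrt{2/c_k} > \sqrt{2/c_k}$, making the argument at least $2\sqrt{\log p}\to\infty$. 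Assembling: in (i), $\mathrm{FDP}(\widehat{\mathcal{S}}_{\mathrm{DS}})\to 0$ (no false positives survive the terminal test) and $\mathrm{NDP}(\widehat{\mathcal{S}}_{\mathrm{DS}})\to 0$ (nearly every signal reaches step $k$ and crosses the cutoff); in (ii), for $x = 0$ there are no signals and w.h.p.\ $\widehat{\mathcal{S}}_{\mathrm{DS}} = \emptyset$, while for $x \ne 0$ at least one of the $S_k \to \infty$ surviving signals passes the terminal test, so w.h.p.\ $\widehat{\mathcal{S}}_{\mathrm{DS}} \ne \emptyset$.

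The principal obstacle is the constant-amplitude analysis in part~(ii): the naive inclusion--exclusion bound $\prod_j q_j \ge 1 - \sum_j (1-q_j)$ can be vacuous because, for $\delta$ close to $1/2$, the sum $\sum_j (1-q_j)$ may exceed $1$ (even though $2\delta > 1$ keeps it finite). The escape route is to observe that each $q_j \ge \Phi(2) > 1/2$ permits the logarithmic bound $\log q_j \ge -2(1-q_j)$, from which $\prod_j q_j \ge e^{-2M}$, a positive constant depending on $\mu,c_1,\delta$ but not on $p$. The strict inequality $\delta > 1/2$ is essential here: $\delta = 1/2$ would leave the per-step loss bounded away from zero and, compounded over $\omega(1)$ stages, would destroy the signal population before the terminal test.
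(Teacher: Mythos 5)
Your argument is correct in substance and follows essentially the same three-phase skeleton as the paper's proof: the nulls halve at each refinement (independently of the precision levels, since the test is a sign test on zero-mean noise), giving $|I_k| \le (1+o(1))\,p/(2\log p)$ on a high-probability event; the signal population is essentially preserved through the $O(\log_2\log p)$ stages; and the terminal threshold $\sqrt{2/c_k}$, standardized by the final precision $R_k/|I_k| \ge (1-o(1))\,2 c_k \log p$, sits at roughly $2\sqrt{\log p}$, so a union bound over at most $p$ retained coordinates gives error probability $p^{-1+o(1)}$ --- exactly the paper's Case~1/Case~2 computation. Where you genuinely differ is the signal-retention step: the paper's Lemma~IV.2 turns the per-step survival probability into a high-probability statement that at most a fraction $\epsilon_j \asymp \sigma_j/(\mu\sqrt{2\pi})$ of the surviving signals is lost at step $j$ (a binomial tail bound), and then invokes Lemma~B.2 to show $\prod_j(1-\epsilon_j)$ stays bounded away from $0$ for constant $\mu$ and tends to $1$ when $\mu\to\infty$; you instead track the per-signal survival probability $q_j=\Phi(\mu\sqrt{\gamma_j})$, exploit the exponentially small miss probability $1-q_j \le \tfrac{1}{2}\exp\bigl(-\tfrac{\mu^2 c_1}{2}(1-o(1))(2\delta)^{j-1}\bigr)$, and lower-bound $\prod_j q_j$ via $\log q_j \ge -2(1-q_j)$. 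This is a sharper per-step estimate and lets you dispense with the paper's product lemma entirely.

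The one step you should tighten is the passage from per-signal survival to the count $S_k$: \emph{conditional independence of signal survivals} is not literally true, because $\gamma_{i,j}=R_j/|I_j|$ and $|I_j|=N_j+S_j$ depends on the other signals' survivals, so the events are coupled through the adaptively chosen precisions. Two standard repairs: (a) condition step-by-step, as the paper does --- given $I_j$, the step-$j$ survivals are i.i.d., so $S_{j+1}\mid I_j \sim \mathrm{Bin}(S_j,q_j)$ with $q_j$ bounded below deterministically on your event $\mathcal{E}$ (using $|I_j|\le N_j+s$), and chain Chernoff bounds with a union bound over the $k-1=O(\log\log p)$ steps; or (b) use monotonicity: survival at step $j$ is monotone in $\gamma_{i,j}$ and $|I_j|\le N_j+s$ holds deterministically, so each signal's survival event contains the auxiliary event $\{w_{i,j} > -\mu\sqrt{R_j/(N_j+s)}\}$, and these auxiliary events are independent across signals conditionally on the null noises, giving binomial domination of $S_k$. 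With either patch, part~(i) goes through (Markov's inequality on the expected number of lost signals already suffices there) and part~(ii) gives $S_k \ge \tfrac{1}{2}q^{\star}s \to \infty$ as you claim, so your conclusions match the theorem.
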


In words, this result states that DS successfully identifies
the sparse signal support provided only that the signal
amplitude grows (arbitrarily slowly) as a function of the
problem dimension $\N$, while reliable signal detection
requires only that the signal amplitude exceed a constant. The
result (ii) is entirely novel, and (i) improves on our initial
result in \cite{haupt:09a} which required $\mu(\N)$ to grow
faster than an arbitrary iteration of the logarithm (i.e.,
$\mu(\N) \sim \log \log \dots \log \N$). Comparison with the
$\Omega(\sqrt{\log{\N}})$ amplitude required for both tasks
using non-adaptive sampling illustrates the dramatic gains that
are achieved through adaptivity.

\section{Analysis of Distilled Sensing}\label{sec:ds_proof}

In this section we prove the main result characterizing the
performance of Distilled Sensing (DS), Theorem~\ref{thm:ds}. We
begin with three lemmas that quantify the finite sample
behavior of DS.

\subsection{Distillation: Reject the Nulls, Retain the Signal}

\begin{lemma}\label{lem:null_keep}
If $\{y_i\}_{i=1}^m \overset{\rm iid}\sim \mathcal{N}(0,\sigma^2)$, $\sigma>0$, then
for any $0  < \varepsilon < 1/2$,
  \begin{equation*}
    \left(\frac{1}{2}-\varepsilon\right)m \leq \Big|
    \{i\in\{1,\dots,m\}: y_i > 0\}\Big| \leq
    \left(\frac{1}{2}+\varepsilon\right)m,
  \end{equation*}
  with probability at least $1-2\exp{(-2m \varepsilon^2)}$.
\end{lemma}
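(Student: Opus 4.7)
My plan is to reduce the statement to a standard concentration inequality for Bernoulli sums. The key observation is that the count $N_+ := |\{i : y_i > 0\}|$ depends on the $y_i$ only through their signs, and since each $y_i$ is a zero-mean Gaussian with positive variance, its sign is a symmetric Rademacher variable.

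Concretely, I would introduce indicators $Z_i := \mathbf{1}\{y_i > 0\}$ for $i = 1, \dots, m$. Because $y_i \sim \mathcal{N}(0, \sigma^2)$ with $\sigma > 0$ is continuous and symmetric about $0$, we have $\mathrm{Pr}(Z_i = 1) = 1/2$, and the independence of the $y_i$ transfers to independence of the $Z_i$. Thus the $Z_i$ are i.i.d.\ Bernoulli$(1/2)$, and $N_+ = \sum_{i=1}^m Z_i$ has mean $m/2$.

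The conclusion then follows from Hoeffding's inequality applied to the bounded random variables $Z_i \in [0,1]$: for any $t > 0$,
\[
\mathrm{Pr}\!\left(\left| N_+ - m/2 \right| \geq t \right) \leq 2 \exp(-2t^2/m).
\]
Setting $t = \varepsilon m$ gives $\mathrm{Pr}(|N_+ - m/2| \geq \varepsilon m) \leq 2 \exp(-2 m \varepsilon^2)$, which is exactly the complement of the event claimed in the lemma.

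There is no real obstacle here; the only step worth being careful about is invoking the symmetry of the centered Gaussian to obtain $\mathrm{Pr}(y_i > 0) = 1/2$ exactly (using $\sigma > 0$ so that the distribution is non-degenerate and has no atom at $0$). The rest is a direct application of Hoeffding's two-sided bound for i.i.d.\ $[0,1]$-valued variables.
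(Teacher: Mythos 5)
Your proof is correct and is essentially the same as the paper's: both reduce the count to a sum of i.i.d.\ Bernoulli$(1/2)$ indicators (using the symmetry of the centered Gaussian) and apply Hoeffding's inequality with deviation $\varepsilon m$ to obtain the bound $2\exp(-2m\varepsilon^2)$.
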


\begin{proof}
 For any event $A$, let $\1_{A}$ be the indicator taking the value $1$ if
 $A$ is true and $0$ otherwise. By Hoeffding's inequality, for any $\varepsilon >0$
  \begin{equation*}
    \P\left(\left|\sum_{i=1}^m \1_{\{y_i> 0\}} - \frac{m}{2}\right| >
    m\varepsilon\right)\leq
    2\exp{\left(-2 m \varepsilon^2\right)}.
  \end{equation*}
Imposing the restriction
  $\varepsilon<1/2$ guarantees that the corresponding fractions
  are bounded away from zero and one.
\end{proof}

\begin{lemma}\label{lem:sig_keep}
Let $\{y_i\}_{i=1}^m \overset{\rm iid}\sim \mathcal{N}(\mu,\sigma^2)$, with
$\sigma>0$ and $\mu\geq 2\sigma$. Define $\epsilon = \frac{\sigma}{\mu \sqrt{2 \pi}}< 1$. Then
  \begin{equation*}
    (1-\epsilon)m \leq \Big|\{i\in\{1,2,\dots,m\}:y_i>0\}\Big| \leq
    m,
  \end{equation*}
  with probability at least $1-\exp{\left(-\frac{\mu m}{4 \sigma\sqrt{2 \pi}}\right)}.$
\end{lemma}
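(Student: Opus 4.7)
The plan is to combine a Mills-ratio tail bound with a Chernoff bound on the resulting Bernoulli indicators. The upper bound $|\{i : y_i > 0\}| \le m$ is immediate. For the lower bound, introduce $Z_i := \mathbf{1}_{\{y_i \le 0\}}$, which are i.i.d.\ Bernoulli with mean $q := \mathbb{P}(y_1 \le 0) = Q(\mu/\sigma)$, where $Q$ denotes the standard normal tail. The event to be controlled is $\sum_{i=1}^m Z_i > \epsilon m$. The structure parallels Lemma~\ref{lem:null_keep}, but because the mean $q$ is much smaller than the deviation target $\epsilon m$, a plain Hoeffding bound would discard the Gaussian tail decay; a multiplicative Chernoff bound is what surfaces the $\mu/\sigma$ factor in the final exponent.

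First I would invoke the standard Mills-ratio inequality $Q(t) \le \frac{1}{t\sqrt{2\pi}}\, e^{-t^2/2}$ at $t = \mu/\sigma \ge 2$ to get
\[
q \;\le\; \frac{\sigma}{\mu\sqrt{2\pi}}\, e^{-\mu^2/(2\sigma^2)} \;=\; \epsilon\, e^{-\mu^2/(2\sigma^2)}.
\]
Next, the Chernoff bound for an i.i.d.\ sum of Bernoullis with mean $q$, optimized at $\lambda = \log(\epsilon/q)$, gives
\[
\mathbb{P}\!\left(\sum_{i=1}^m Z_i \ge \epsilon m\right) \;\le\; \exp\!\Bigl(-\epsilon m \log(\epsilon/q) + \epsilon m - m q\Bigr).
\]
Substituting the Mills bound gives $\log(\epsilon/q) \ge \mu^2/(2\sigma^2)$, and discarding the nonpositive term $-mq$ yields an exponent of at most $-\epsilon m \bigl(\mu^2/(2\sigma^2) - 1\bigr)$.

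The main obstacle is the final algebraic reduction to the stated form. I need to show $-\epsilon m\bigl(\mu^2/(2\sigma^2) - 1\bigr) \le -\mu m /(4\sigma\sqrt{2\pi})$. Here the assumption $\mu \ge 2\sigma$ does double duty: it makes the Mills bound small in step one, and via $\mu^2/(2\sigma^2) \ge 2$ it implies $\mu^2/(2\sigma^2) - 1 \ge \mu^2/(4\sigma^2)$. Plugging in $\epsilon = \sigma/(\mu\sqrt{2\pi})$ then telescopes $\epsilon m \cdot \mu^2/(4\sigma^2)$ to exactly $\mu m/(4\sigma\sqrt{2\pi})$, matching the target. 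Everything else is bookkeeping, but this final step is where the $\mu \ge 2\sigma$ hypothesis is genuinely used, rather than being absorbed into a constant.
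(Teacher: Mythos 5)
Your proposal is correct and follows essentially the same route as the paper: bound the per-coordinate miss probability via the Gaussian (Mills-ratio) tail, then apply a Chernoff bound to the binomial count, using $\mu \ge 2\sigma$ to absorb the lower-order term into half the exponent and obtain exactly $\mu m/(4\sigma\sqrt{2\pi})$. The only cosmetic difference is that you bound the count of failures with the Poissonized multiplicative Chernoff bound, whereas the paper bounds the count of successes with the exact binomial (relative-entropy) tail bound of Chernoff; the mechanism and final algebra are the same.
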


\begin{proof}
  We will utilize the following standard bound on the
  Gaussian tail: for $Z\sim\mathcal{N}(0,1)$ and
  $\gamma>0$,
  \begin{equation*}
    \frac{1}{\sqrt{2\pi\gamma^2}}\left(1-\frac{1}{\gamma^2}\right)
    \exp(-\gamma^2/2) \leq \Pr(Z> \gamma) \leq
    \frac{1}{\sqrt{2\pi\gamma^2}} \exp(-\gamma^2/2).
  \end{equation*}
  Let $q = \Pr(y_i >0)$, then it follows that
  \begin{equation*}
    1-q \ \leq \ \frac{\sigma}{\mu\sqrt{2\pi}} \exp{\left(-\frac{\mu^2}{2\sigma^2}\right)}\ .
  \end{equation*}
  Next we use the Binomial tail bound from \cite{chernoff:52}:
  for any $0<b < \E[ \sum_{i=1}^m \1_{\{y_i >0\}}] = mp$,
  \begin{equation*}
    \P\left(\sum_{i=1}^m \1_{\{y_i >0\}} \leq b\right) \leq \left(\frac{m-mp}{m-b}\right)^{m-b}
    \left(\frac{mp}{b}\right)^{b}\ .
  \end{equation*}
  Note that $\epsilon > 1-q$ (or equivalently, $1-\epsilon <
  q$), so we can apply this result to $\sum_{i=1}^m \1_{\{y_i
  >0\}}$ with $b=(1-\epsilon)m$ to obtain
  \begin{eqnarray*}
    \P\left(\sum_{i=1}^m \1_{\{y_i >0\}} \leq (1-\epsilon)m\right) &\leq& \left(\frac{1-q}{\epsilon}\right)^{\epsilon m}
    \left(\frac{q}{1-\epsilon}\right)^{(1-\epsilon)m}\\
    &\leq&\exp{\left(-\frac{\mu^2\epsilon m}{2\sigma^2}\right)} \left(\frac{1}{1-\epsilon}\right)^{(1-\epsilon)m}.
  \end{eqnarray*}
  Now, to establish the stated result, it suffices to show
  \begin{eqnarray*}
    -\frac{\mu^2}{2\sigma^2} + \left(\frac{1-\epsilon}{\epsilon}\right) \log{\left(\frac{1}{1-\epsilon}\right)} &\leq&
    -\frac{\mu}{4\epsilon\sigma\sqrt{2\pi}} \ = \ -\frac{\mu^2}{4\sigma^2},
  \end{eqnarray*}
  which holds provided $\mu \geq 2\sigma$, since $0<\epsilon<1$
  and $\left(\frac{1-\epsilon}{\epsilon}\right)
  \log{\left(\frac{1}{1-\epsilon}\right)}  \leq 1$ for
  $\epsilon\in (0,1)$.
\end{proof}

\subsection{The Output of the DS Procedure}

Refer to Algorithm~\ref{alg:DS} and  define  $s_j := |\S
\bigcap I_j|$ and $z_j := |\S^c\bigcap I_j|$, the number of
non-zero and zero components, respectively, present at the
beginning of step $j$, for $j=1,\dots,k$. Let $\varepsilon >
0$, and for $j=1,\dots,k-1$ define
\begin{eqnarray}
  \epsilon_j^2 &:= &
  {\frac{s_1+(1/2+\varepsilon)^{j-1} z_1}{2\pi \mu^2 R_j}} \ ,\label{eqn:epsilons}
\end{eqnarray}
The output of the DS procedure is quantified in the following
result.

\begin{lemma}\label{lem:gen_result}
  Let  $0 < \varepsilon < 1/2$ and assume that $R_j >
  \frac{4}{\mu^2} \left(s_1+(1/2+\varepsilon)^{j-1}z_1\right)$,
  $j=1,\dots,k-1$.  If $|\S|>0$, then with probability at least
  \begin{equation*}
    1-\sum_{j=1}^{k-1}\exp{\left(\frac{-s_1 \prod_{\ell=1}^{j-1}
    (1-\epsilon_\ell)}{2\sqrt{2\pi}}\right)} -2\sum_{j=1}^{k-1}
    \exp{(-2 z_1 (1/2-\varepsilon)^{j-1} \varepsilon^2)} \ ,
  \end{equation*}
  $\prod_{\ell=1}^{j-1} (1-\epsilon_{\ell})s_{1} \leq s_{j}
  \leq s_1$ and $\left(\frac{1}{2}-\varepsilon\right)^{j-1} z_1
  \leq z_{j} \leq
  \left(\frac{1}{2}+\varepsilon\right)^{j-1}z_1$ for
  $j=2,\dots,k$.  If $|\S| = 0$, then with probability at least
  \begin{equation*}
    1-2\sum_{j=1}^{k-1}\exp{(-2 z_1 (1/2-\varepsilon)^{j-1} \varepsilon^2)} \ ,
  \end{equation*}
  $\left(\frac{1}{2}-\varepsilon\right)^{j-1} z_1 \leq z_{j} \leq
  \left(\frac{1}{2}+\varepsilon\right)^{j-1}z_1$ for $j=2,\dots,k$.
\end{lemma}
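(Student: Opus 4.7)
The plan is to prove the result by induction on the step index $j$, conditioning at each stage on the retained index set $I_j$ and applying Lemmas~\ref{lem:null_keep} and~\ref{lem:sig_keep} in tandem. At the start of step $j$ the set $I_j$ is determined by the observations $\{y_{i,\ell}\}_{\ell<j}$ and contains $s_j$ signal and $z_j$ null indices. Conditional on $I_j$, the DS measurements are independent, with $y_{i,j}\sim \mathcal{N}(\mu,\sigma_j^2)$ for $i\in \S\cap I_j$ and $y_{i,j}\sim \mathcal{N}(0,\sigma_j^2)$ for $i\in \S^c\cap I_j$, where $\sigma_j^2 = |I_j|/R_j$. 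The inductive hypothesis I would carry is the two-sided bound
\[
s_1 \prod_{\ell=1}^{j-1}(1-\epsilon_\ell) \leq s_j \leq s_1, \qquad (1/2-\varepsilon)^{j-1} z_1 \leq z_j \leq (1/2+\varepsilon)^{j-1} z_1 .
\]

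For the inductive step, the upper bounds give $|I_j| \leq s_1 + (1/2+\varepsilon)^{j-1} z_1$, so the budget assumption $R_j > \tfrac{4}{\mu^2}(s_1+(1/2+\varepsilon)^{j-1}z_1)$ guarantees $\mu \geq 2\sigma_j$. Lemma~\ref{lem:sig_keep} then applies to the $s_j$ signal observations; its $\epsilon$ parameter equals $\sigma_j/(\mu\sqrt{2\pi})$ which is at most $\epsilon_j$ as defined in \eqref{eqn:epsilons}, so $s_{j+1} \geq (1-\epsilon_j)s_j$ with conditional failure probability at most $\exp(-s_j\mu/(4\sigma_j\sqrt{2\pi}))$. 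Using $\mu/(2\sigma_j)\geq 1$ together with the inductive lower bound on $s_j$, this simplifies to the stated $\exp(-s_1\prod_{\ell=1}^{j-1}(1-\epsilon_\ell)/(2\sqrt{2\pi}))$. In parallel, Lemma~\ref{lem:null_keep} applied to the $z_j$ null observations yields $z_{j+1} \in [(1/2-\varepsilon)z_j,(1/2+\varepsilon)z_j]$ with conditional failure probability at most $2\exp(-2z_j\varepsilon^2) \leq 2\exp(-2(1/2-\varepsilon)^{j-1}z_1\varepsilon^2)$. Summing these two families of failure probabilities over $j=1,\dots,k-1$ reproduces exactly the bound in the lemma statement.

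The main subtlety will be measurability: $I_j$ is a random object that depends on the past observations, so the bounds from Lemmas~\ref{lem:null_keep} and~\ref{lem:sig_keep} only hold conditionally on $I_j$. I would make the induction rigorous by introducing a ``good event'' $G_j$ that $s_\ell,z_\ell$ satisfy the claimed two-sided bounds for every $\ell\leq j$, noting that the conditional failure probabilities derived above are monotone in $(s_j,z_j)$ and so remain valid uniformly over every realization of $I_j$ consistent with $G_j$, and then chaining the one-step estimates via $\P(G_k^c) \leq \sum_{j=1}^{k-1}\P(G_{j+1}^c\mid G_j)$. The case $|\S|=0$ is strictly simpler: no signal-retention step is needed, and the argument reduces to $k-1$ conditional applications of Lemma~\ref{lem:null_keep} glued by the same union bound.
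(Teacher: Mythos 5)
Your proposal is correct and follows essentially the same route as the paper's proof: condition on the outcome of the prior refinement steps, apply Lemma~\ref{lem:null_keep} to the null components and Lemma~\ref{lem:sig_keep} to the signal components (with the budget condition guaranteeing $\mu > 2\sigma_j$, which both justifies Lemma~\ref{lem:sig_keep} and simplifies its exponent, and with the data-dependent quantity $\sigma_j/(\mu\sqrt{2\pi})$ replaced by the deterministic $\epsilon_j$ via the inductive upper bounds on $s_j$ and $z_j$), then chain the one-step failure probabilities by the union bound. Your explicit good-event and monotonicity handling of the conditioning is simply a more careful rendering of the paper's "condition on all prior refinement steps and recursively apply the union bound" step.
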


\begin{proof}
  The results follow from
  Lemmas~\ref{lem:null_keep}~and~\ref{lem:sig_keep} and the
  union bound. First assume that $s_1 = |\S|>0$.   Let
  $\sigma^2_j := |I_j|/R_{j}= (s_j+z_j)/R_j$ and
  $\widetilde\epsilon_j := \frac{\sigma_j}{\mu \sqrt{2 \pi}}$,
  $j=1,\dots,k$.

  The argument proceeds by conditioning on the output of all
  prior refinement steps; in particular, suppose that
  $(1-\widetilde\epsilon_{\ell-1})s_{\ell-1} \leq s_{\ell} \leq
  s_{\ell-1}$ and
  $\left(\frac{1}{2}-\varepsilon\right)z_{\ell-1} \leq z_{\ell}
  \leq \left(\frac{1}{2}+\varepsilon\right)z_{\ell-1}$for $\ell
  = 1,\dots,j$. Then apply Lemma~\ref{lem:null_keep} with
  $m=z_{j}$, Lemma~\ref{lem:sig_keep} with $m=s_j$ and
  $\sigma^2 = \sigma_j^2$, and the union bound to obtain that
  with probability at least
  \begin{equation}
    1-\exp{\left(-\frac{\mu s_j}{4 \sigma_{j}\sqrt{2
    \pi}}\right)} -2\exp{(-2 z_j \varepsilon^2)} \ ,\label{eq:pb}
  \end{equation}
  $(1-\widetilde\epsilon_{j})s_{j} \leq s_{j+1} \leq s_{j}$,
  and $\left(\frac{1}{2}-\varepsilon\right)z_j \leq z_{j+1}
  \leq \left(\frac{1}{2}+\varepsilon\right)z_j$.  Note that the
  condition $R_j > \frac{4}{\mu^2}
  \left(s_1+(1/2+\varepsilon)^{j-1}z_1\right)$ and the
  assumptions on prior refinement steps ensure that $\mu> 2 \,
  \sigma_j$, which is required for Lemma~\ref{lem:sig_keep}.
  The condition $\mu> 2 \, \sigma_j$ also allows us to simplify
  probability bound (\ref{eq:pb}), so that the event above
  occurs with probability at least
  \begin{equation*}
    1-\exp{\left(-\frac{s_j}{2\sqrt{2
    \pi}}\right)} -2\exp{(-2 z_j \varepsilon^2)}.
  \end{equation*}

  Next, we can recursively apply the union bound and the bounds
  on $s_j$ and  $z_j$ above to obtain for
  $j=1,\dots,k-1$
  \begin{eqnarray*}
    \epsilon_j & =  & \sqrt{\frac{s_1+(1/2+\varepsilon)^{j-1} z_1}{2\pi \mu^2
    R_j}}  \ \geq \ \widetilde \epsilon_j \ = \ \frac{\sigma_j}{\mu \sqrt{2\pi}} \ ,
  \end{eqnarray*}
  with probability at least
  \begin{equation*}
    1-\sum_{j=1}^{k-1}\exp{\left(\frac{-s_1 \prod_{\ell=1}^{j-1}
    (1-\epsilon_\ell)}{2\sqrt{2 \pi}}\right)} -\sum_{j=1}^{k-1} 2\exp{(-2 z_1
    (1/2-\varepsilon)^{j-1} \varepsilon^2)} \ .
  \end{equation*}
  Note that the condition $R_j > \frac{4}{\mu^2}
  \left(s_1+(1/2+\varepsilon)^{j-1}z_1\right)$ implies that
  $\epsilon_j < 1$. The first result follows directly. If $s_1
  = |\S| = 0$, then consider only $z_j$, $j=1,\dots,k$. The
  result follows again by the union bound. Note that for this
  statement the condition on $R_j$ is not required.
\end{proof}

\sloppypar Now we examine the conditions $R_j > \frac{4}{
\mu^2} \left(s_1+(1/2+\varepsilon)^{j-1}z_1\right)$,
$j=1,\dots,k$ more closely. Define
$c:=s_1/[(1/2+\varepsilon)^{k-1} z_1]$, in effect condensing
several problem-specific parameters ($s_1$, $z_1$, and $k$)
into a single scalar parameter. Then the conditions on $R_j$
are satisfied if
\begin{equation*}
  R_j >\frac{4z_1 (1/2+\varepsilon)^{j-1}}{\mu^2} (c
  (1/2+\varepsilon)^{k-j}+1) \ .
\end{equation*}
Since $z_1 \leq {\N}$, the following condition is sufficient
\begin{equation*}
  R_j > \frac{4{\N} (1/2+\varepsilon)^{j-1}}{\mu^2} (c (1/2+\varepsilon)^{k-j}+1) \ ,
\end{equation*}
and in particular the more stringent condition \mbox{$R_j >
\frac{4(c+1){\N} (1/2+\varepsilon)^{j-1}}{\mu^2}$} will
suffice. It is now easy to see that if $s_1 \ll z1$ (e.g., so
that $c \leq 1$), then the sufficient conditions become $R_j >
\frac{8{\N}}{ \mu^2} (1/2+\varepsilon)^{j-1}$, $j=1,\dots,k$.
Thus, for the sparse situations we consider, the precision
allocated to each step must be just slightly greater than $1/2$
of the precision allocated in the previous step. We are now in
position to prove the main theorem.

\subsection{Proof of Theorem~III.1}

\sloppypar Throughout the proof, whenever asymptotic notation
or limits are used it is always under the assumption that
${\N}\rightarrow\infty$, and we use the standard notation
$f({\N})=o(g({\N}))$ to indicate that $\lim_{{\N} \rightarrow
\infty} f({\N})/g({\N}) = 0$, for $f({\N}) \geq 0$ and $g({\N})
> 0$. Also the quantities $k:=k({\N})$,
$\varepsilon:= \varepsilon({\N})$ and $\mu:=\mu({\N})$ are
functions of ${\N}$, but we do not denote this explicitly for
ease of notation. We let $\varepsilon :={\N}^{-1/3}$ throughout
the proof.

We begin by proving part (ii) of the theorem, which is
concerned with detecting the presence or absence of a sparse
signal. Part (i), which pertains to identifying the locations
of the non-zero components, then follows with a slight
modification.

\textbf{Case 1 -- Signal absent ($\mathcal{S}=\emptyset$):}
This is the simplest scenario, but through its analysis we will
develop tools that will be useful when analyzing the case where
the signal is present. Here, we have $s_1=0$ and $z_1={\N}$,
and the number of indices retained at the end of the DS
procedure $|I_k|$ is equal to $z_k$. Define the event
\begin{equation*}
  \Gamma \ = \ \left\{\left(\frac{1}{2}-
  \varepsilon\right)^{k-1}{\N} \ \leq \ |I_{k}| \ \leq \
  \left(\frac{1}{2}+\varepsilon\right)^{k-1}{\N}\right\} \ .
\end{equation*}
The second part of Lemma~\ref{lem:gen_result} characterizes the
probability of this event; in particular
\begin{equation*}
  \Pr(\Gamma) \ \geq \ 1-2\sum_{j=1}^{k-1}\exp\left(-2{\N}
  \left(\frac{1}{2}-\varepsilon\right)^{j-1} \varepsilon^2 \right) .
\end{equation*}
Since $k\leq \log_2\log {\N}+3$, for large enough $\N$ we get
that
\begin{eqnarray*}
  \Pr(\Gamma) &\geq& 1-2(k-1)\exp\left(-2{\N} \left(\frac{1}{2}-\varepsilon\right)^{k-2} \varepsilon^2 \right) \\
  &=& 1-2(k-1)\exp\left(-{\N} \left(\frac{1}{2}\right)^{k-3} (1-2\varepsilon)^{k-2} \varepsilon^2 \right) \\
  &\geq& 1-2(\log_2 \log {\N}+2)\exp\left(-\frac{{\N}^{1/3}}{\log {\N}}(1-o(1)) \right)
\end{eqnarray*}
where we used Lemma~\ref{lem:lim} to conclude that
$(1-2\varepsilon)^{k-2} = 1-o(1)$. It is clear that
$\Pr(\Gamma)\rightarrow 1$.

In this case we assume that $\mathcal{S}=\emptyset$, therefore
the output of the DS procedure consists of $|I_k|$ i.i.d.
Gaussian random variables with zero mean and variance
$|I_k|/R_k=|I_k|/(c_k {\N})$. Note that given $\Gamma$,
\begin{eqnarray*}
  |I_k| \ \leq \
  {\N}\left(\frac{1}{2}+\varepsilon\right)^{k-1}&=&
  {\N}\frac{1}{2}\left(\frac{1}{2}\right)^{k-2}\left(1+2\varepsilon\right)^{k-1}\\
  &\leq& \frac{1}{2} \frac{{\N}}{\log {\N}} (1+o(1))\ ,
\end{eqnarray*}
which follows from the fact that $k\geq \log_2\log {\N}+2$, and
using Lemma~\ref{lem:lim}. With this in hand we conclude that
(with a slight abuse of notation)
\begin{eqnarray*}
  \Pr(\widehat{\mathcal{S}}_{\rm DS}\neq \emptyset \ | \ \Gamma) &=& \Pr\left(\exists_{i\in I_k} : y_{i,k}>
  \sqrt{2/c_k}\right)\\
  &\leq& |I_k| \Pr \left(\mathcal{N}(0,|I_k|/c_k {\N})>\sqrt{2/c_k}\right)\\
  &=& |I_k| \Pr \left(\mathcal{N}(0,1)>\sqrt{2{\N}/|I_k|}\right)\\
  &\leq& {\N} \Pr \left(\mathcal{N}(0,1)>\sqrt{4\log {\N} (1-o(1))}\right)\\
  &\leq& {\N} \exp\left(-2\log {\N} (1-o(1))\right)\\
  &=& {\N}^{-1+o(1)} \rightarrow 0\ ,
\end{eqnarray*}
where the last inequality follows from the standard Gaussian
tail bound. This together with $\Pr(\Gamma)\rightarrow 1$
immediately shows that when $\mathcal{S}=\emptyset$ we have
$\Pr(\widehat{\mathcal{S}}_{\rm DS}\neq \emptyset)\rightarrow
0$.

\textbf{Case 2 -- Signal present ($\mathcal{S}\neq\emptyset$):}
The proof follows the same idea as in the previous case,
although the argument is a little more involved. Begin by
applying Lemma~\ref{lem:gen_result} and constructing an event
that occurs with probability tending to one. Let $\Gamma$ be
the event
\begin{eqnarray*}
  \Gamma & = & \left\{z_1\left(\frac{1}{2}-\varepsilon\right)^{k-1} \ \leq \ z_k \ \leq \
  z_1\left(\frac{1}{2}+\varepsilon\right)^{k-1} \right\} \\
  & & \quad \quad \bigcap \quad \quad \left\{ s_1\prod_{j=1}^{k-1} (1-\epsilon_j) \ \leq  \ s_k \ \leq \ s_1 \right\} \ ,
\end{eqnarray*}
where $\epsilon_j$ is given by equation \eqref{eqn:epsilons}.
Lemma~\ref{lem:gen_result} characterizes the probability of
this event under a condition on $R_j$ that we will now verify.
Note that this condition is equivalent to $\epsilon_j^2<
1/(8\pi)$ for all $j=1,\ldots,k-1$. Instead of showing exactly
this we will show a stronger result that will be quite useful
in a later stage of the proof. Recall that $R_{j+1}/R_{j}\geq
\delta>1/2$, $j=1,\ldots,k-2$, and $R_1=c_1 {\N}$ by the
assumptions of the theorem. Thus for $j=1,\ldots,k-1$
\begin{eqnarray*}
  \epsilon^2_j &\leq& \frac{s_1+\left(\frac{1}{2}+\varepsilon\right)^{j-1}z_1}{2\pi\mu^2 \delta^{j-1} R_1} \\
  &\leq& \frac{1}{2\pi\mu^2 c_1}\left(\frac{s_1}{{\N}}\delta^{-(j-1)}+\frac{z_1}{{\N}}
  \left(\frac{\delta}{\frac{1}{2}+\varepsilon}\right)^{-(j-1)} \right)\ .
\end{eqnarray*}
Clearly we have that $\epsilon^2_1\leq\frac{1}{2\pi\mu^2
c_1}<1/(8\pi)$ since by assumption $\mu>\sqrt{4/c_1}$.  Now
consider the case $j>1$. Recall that $k\leq \log_2\log {\N}
+3$. Therefore if $\delta\geq 1$, then the term
$\delta^{-(j-1)}$ can be upper bounded by 1, otherwise
\begin{equation}
  \delta^{-(j-1)}\leq \delta^{-(k-2)} \leq \delta^{-(\log_2 \log {\N}+1)}
  =\delta^{-1} \left(\log {\N}\right)^{-\log_2 \delta} \leq 2\log
  {\N}\ ,\label{eqn:power_of_rho}
\end{equation}
where the last step follows from $\delta>1/2$.

Now recall that $s_1={\N}^{1-\beta}$, therefore
\begin{eqnarray}
  \epsilon^2_j &\leq& \frac{1}{2\pi\mu^2
  c_1}\left({\N}^{-\beta}\delta^{-(j-1)}+\left(\frac{\delta}{\frac{1}{2}+\varepsilon}\right)^{-(j-1)} \right)\nonumber\\
  &\leq& \frac{1}{2\pi\mu^2 c_1}\left(2 {\N}^{-\beta}\log {\N}+\left(\frac{\delta}{\frac{1}{2}+\varepsilon}\right)^{-(j-1)}
  \right)\ .\label{eqn:epsilon_bound}
\end{eqnarray}
Note that, since $\varepsilon\rightarrow 0$ as ${\N}\rightarrow
\infty$ we have that, for ${\N}$ large enough,
$\delta/(1/2+\varepsilon)>(\delta+1/2+\varepsilon)$. Assume
${\N}$ is large enough so that this is true, then
\begin{eqnarray*}
  \epsilon^2_j & \leq & \frac{1}{2\pi\mu^2 c_1}\left(2{\N}^{-\beta}\log {\N}
  +\left(\delta+\frac{1}{2}+\varepsilon\right)^{-(j-1)} \right)\ .
\end{eqnarray*}
Clearly since $j\leq k-1 \leq \log_2\log {\N}+2$ we have that
$\left(\delta+\frac{1}{2}+\varepsilon\right)^{-(j-1)}=\Omega\left(1/(\log
{\N})^{\log_2(\delta+1/2+\epsilon)}\right)$ and so the first of
the additive terms in \eqref{eqn:epsilon_bound} is negligible
for large ${\N}$. Therefore for ${\N}$ sufficiently large, we
have, for all $j=1,\ldots,k-1$
\begin{equation}
\epsilon^2_j \ \leq  \ \frac{1}{2\pi\mu^2 c_1}
\left(\delta+\frac{1}{2}\right)^{-(j-1)}\ .\label{eqn:bound_on_epsilon}
\end{equation}
Since by assumption $\mu>\sqrt{4/c_1}$, we conclude that, for
all ${\N}$ sufficiently large, $\epsilon^2_j<1/(8\pi)$ for all
$j=1,\ldots,k-1$, and so $R_j > \frac{4}{\mu^2}
\left(s_1+(1/2+\varepsilon)^{j-1}z_1\right)$ for
$j=1,\dots,k-1$. Thus, applying Lemma~\ref{lem:gen_result} we
have
\begin{eqnarray*}
  \lefteqn{\Pr(\Gamma)}\\
  &\geq& 1-\sum_{j=1}^{k-1}\exp{\left(\frac{-s_1 \prod_{\ell=1}^{j-1} (1-\epsilon_\ell)}
  {2\sqrt{2 \pi}}\right)} -2\sum_{j=1}^{k-1}\exp{(-2 z_1 (1/2-\varepsilon)^{j-1} \varepsilon^2)}.
\end{eqnarray*}

By a similar argument to that used in Case 1, it is
straightforward to show that
\begin{equation*}
  2\sum_{j=1}^{k-1}\exp{(-2 z_1 (1/2-\varepsilon)^{j-1} \varepsilon^2)} \rightarrow 0 \ .
\end{equation*}
In addition,
\begin{eqnarray*}
  \lefteqn{\sum_{j=1}^{k-1}\exp{\left(\frac{-s_1 \prod_{\ell=1}^{j-1} (1-\epsilon_\ell)}{\sqrt{8
  \pi}}\right)}}\\
  &\leq& (k-1)\exp{\left(\frac{-s_1 \prod_{\ell=1}^{k-2}
  (1-\epsilon_\ell)}{\sqrt{8 \pi}}\right)}\\
  &\leq& (k-1)\exp{\left(\frac{-s_1 \prod_{\ell=1}^{k-2}
  \left(1-\frac{1}{\mu\sqrt{2\pi c_1}} \left(\delta+\frac{1}{2}\right)^{-(\ell-1)/2}\right)}{\sqrt{8 \pi}}\right)}\\
  &\leq& (k-1)\exp{\left(\frac{-s_1 \prod_{\ell=1}^{k-2}
  \left(1-\frac{1}{\sqrt{8\pi}} \left(\delta+\frac{1}{2}\right)^{-(\ell-1)/2}\right)}{\sqrt{8 \pi}}\right)}\ ,
\end{eqnarray*}
where in the last step we used the fact that
$\mu>\sqrt{4/c_1}$. Finally note that from
Lemma~\ref{lem:fractions} we know that
\begin{equation*}
  \prod_{\ell=1}^{k-2} \left(1-\frac{1}{\sqrt{8\pi}}
  \left(\delta+\frac{1}{2}\right)^{-(\ell-1)/2}\right)\rightarrow L(\delta)\ ,
\end{equation*}
where $L(\delta)>0$ hence
\begin{eqnarray}
  \nonumber \lefteqn{\sum_{j=1}^{k-1}
  \exp{\left(\frac{-s_1 \prod_{\ell=1}^{j-1} (1-\epsilon_\ell)}{\sqrt{8\pi}}\right)}}\\
  &\leq& \nonumber (\log_2 \log {\N}+2) \exp{\left(\frac{-{\N}^{1-\beta} (L(\delta)+o(1))}{\sqrt{8
  \pi}}\right)}\\
  &\rightarrow& 0 \ . \label{eqn:Gamma_bnd}
\end{eqnarray}
Therefore we conclude that the event $\Gamma$ happens with probability converging to one.

We  now proceed as before, by conditioning on event $\Gamma$.
The output of the DS procedure consists of a total of
$|I_k|=s_k+z_k$ independent Gaussian measurements with variance
$|I_k|/R_k$, where $s_k$ of them have mean $\mu$ and the
remaining $z_k$ have mean zero. We will show that the proposed
thresholding procedure identifies only true non-zero components
(i.e., correctly rejects all the zero-valued components). In
other words, with probability tending to one,
$\widehat{\mathcal S}_{\rm DS}= {\mathcal S}\cap I_k$. For ease
of notation, and without loss of generality, assume the
$y_{i,k}\sim \mathcal{N}(\mu,|I_k|/R_k)$ for
$i\in\{1,\ldots,s_k\}$ and $y_{i,k}\sim
\mathcal{N}(0,|I_k|/R_k)$ for $i\in\{s_k+1,\ldots,|I_k|\}$.
Then
\begin{eqnarray*}
  \lefteqn{\Pr\left(\left. \widehat{\mathcal S}_{\rm DS}\neq {\mathcal S}\cap I_k \ \right| \ \Gamma\right)}\\
  & = & \Pr\left(\left. \bigcup_{i=1}^{s_k} \left\{y_{i,k}<\sqrt{2/c_k}\right\} \mbox{ or }
  \bigcup_{i=s_k+1}^{|I_k|} \left\{y_{i,k}>\sqrt{2/c_k}\right\} \ \right|
  \ \Gamma\right)\\
  &\leq& s_k \Pr\left({\mathcal N}(\mu,|I_k|/R_k)<\sqrt{2/c_k}\right) + z_k \Pr\left({\mathcal
  N}(0,|I_k|/R_k)>\sqrt{2/c_k}\right).
\end{eqnarray*}
Note that conditioned on the event $\Gamma$ (using arguments similar to those in Case 1)
\begin{eqnarray}
  \nonumber |I_k| & = & s_k+z_k \ \leq \ s_1+z_1\left(\frac{1}{2}+\varepsilon\right)^{k-1} \\
  & \leq & {\N}^{1-\beta} +\frac{{\N}}{2\log {\N}}(1+o(1)) \
  \leq \ \frac{{\N}}{2\log {\N}}(1+o(1))\ . \label{eqn:size_of_I_k}
\end{eqnarray}
Finally, taking into account that $\mu>2\sqrt{2/c_k}$ we
conclude that
\begin{eqnarray*}
  \lefteqn{\Pr\left(\left. \widehat{\mathcal S}_{\rm DS}\neq {\mathcal S}\cap I_k \right|\Gamma\right)}\\
  & \leq & s_k \Pr\left({\mathcal N}(0,|I_k|/R_k)<-\sqrt{2/c_k}\right)+
  z_k \Pr\left({\mathcal N}(0,|I_k|/R_k)>\sqrt{2/c_k}\right)\\
  & \leq & s_k \Pr\left({\mathcal N}(0,1)>\sqrt{\frac{2{\N}}{|I_k|}}\right)+z_k \Pr\left({\mathcal
  N}(0,1)>\sqrt{\frac{2{\N}}{|I_k|}}\right)\\
  & = & |I_k| \Pr\left({\mathcal N}(0,1)>\sqrt{\frac{2{\N}}{|I_k|}}\right)\\
  &\leq& {\N} \Pr \left(\mathcal{N}(0,1)>\sqrt{4\log {\N} (1-o(1))}\right)\\
  &\leq& {\N} \exp\left(-2\log {\N} (1-o(1))\right)\\
  &=& {\N}^{-1+o(1)} \rightarrow 0\ ,
\end{eqnarray*}
where the last inequality follows from the standard Gaussian
tail bound. This together with $\Pr(\Gamma)\rightarrow 1$, and
the fact that $|{\mathcal S}\cap I_k|=s_k =
L(\delta)(1-o(1))s_1$ is bounded away from zero for large
enough $\N$ immediately shows that
$\Pr(\widehat{\mathcal{S}}_{\rm DS}=\emptyset)\rightarrow 0$,
concluding the proof of part (ii) of the theorem.

Part (i) of the theorem follows from the result proved above,
since if $\mu$ is any positive diverging sequence in ${\N}$
then a stronger version of Lemma~\ref{lem:fractions} applies.
In particular, recall \eqref{eqn:bound_on_epsilon}, and note
that Lemma~\ref{lem:fractions} implies
\begin{equation*}
  \prod_{\ell=1}^{k-1} (1-\epsilon_\ell) \ \geq \ \prod_{\ell=1}^{k-1} \left(1-\frac{1}{\mu\sqrt{2\pi c_1}}
\left(\delta+\frac{1}{2}\right)^{-(\ell-1)/2}\right)\rightarrow 1\ .
\end{equation*}
We have already established that the events $\Gamma$ and
$\{\widehat{\mathcal S}_{\rm DS}\neq {\mathcal S}\cap I_k\}$
both hold (simultaneously) with probability tending to one.
Conditionally on these events we have
\begin{equation*}
  \mbox{\rm FDP}(\widehat{\cal S}_{\rm DS})=\frac{0}{s_k}=0\ ,
\end{equation*}
and
\begin{equation*}
  \mbox{\rm NDP}(\widehat{\cal S}_{\rm DS})=\frac{s_1-s_k}{s_1}=1-\frac{s_k}{s_1}\rightarrow 0\ ,
\end{equation*}
since from the definition of $\Gamma$ we have
\begin{equation*}
  s_1 \ \geq \ s_k \ \geq \ s_1 \prod_{\ell=1}^{k-1} (1-\epsilon_\ell) \rightarrow s_1 \ .
\end{equation*}
Therefore we conclude that both $\mbox{\rm FDP}(\widehat{\cal
S}_{\rm DS})$ and $\mbox{\rm NDP}(\widehat{\cal S}_{\rm DS})$
converge in probability to zero as ${\N}\rightarrow \infty$,
concluding the proof of the theorem.

\section{Numerical Experiments}\label{sec:exp}

This section presents numerical experiments with Distilled
Sensing (DS).  The results demonstrate that the asymptotic
analysis predicts the performance in finite dimensional cases
quite well.  Furthermore, the experiments suggest useful rules
of thumb for implementing DS in practice.

There are two input parameters to the DS procedure; the number
of distillation steps, $k$, and the distribution of precision
across the steps, $\{R_j\}_{j=1}^k$.  Throughout our
simulations we choose $k=\max\{\lceil \log_2 \log
{\N}\rceil,0\}+2$, as prescribed in Theorem~\ref{thm:ds}. For
the precision distribution, first recall the discussion
following the proof of Lemma~\ref{lem:gen_result}.  There it is
argued that if the sparsity model is valid, a sufficient
condition for the precision distribution is $R_j > R_1
(1/2+\varepsilon)^{j-1}$, $j=1,\dots,k$, with $0 < \varepsilon
< 1/2$.  In words, the precision allocated to each step must be
greater than $1/2$ the precision allocated in the previous
step.  In practice, we find that choosing $R_{j+1}/R_{j} =
0.75$ for $j=1,\dots,k-2$ provides good performance over the
full SNR range of interest. Also, from the proof of the main
result (Theorem~\ref{thm:ds}) we see that the threshold for
detection is inversely proportional to the square root of the
precision allocation in the first and last steps. Thus, we have
found that allocating equal precision in the first and last
steps is beneficial. The intuition is that the first step is
the most crucial in controlling the NDP and the final step is
most crucial in controlling the FDP. Thus, the precision
allocation used throughout the simulations follows this simple
formula:
\begin{eqnarray*}
R_j & = &  (0.75)^{j-1} R_1 \, , \ j=2,\dots,k-1 \, , \\
R_k & = & R_1 \, ,
\end{eqnarray*}
and $R_1$ is chosen so that  $\sum_{j=1}^k R_j ={\N}$.

Figure~\ref{fig:DS_results} compares the FDP vs.\ NDP
performance of the DS procedure to non-adaptive (single
observation) measurement at several signal-to-noise ratios (SNR
= $\mu^2$). We consider signals of length ${\N}=2^{14}$ having
$\sqrt{\N}=128$ non-zero components with uniform amplitude with
locations chosen uniformly at random. This choice of signal
dimension corresponds to $k=6$ observation steps in the DS
procedure. The range of FDP-NDP operating points is surveyed by
varying the threshold applied to the non-adaptive measurements
and the output of the DS procedure for each of $1000$ trials,
corresponding to different realizations of randomly-generated
signal and additive noise. Recall that largest squared
magnitude in a realization of $\N$ i.i.d.\ ${\cal N}(0,1)$
variables grows like $2 \log {\N}$, and in our experiment, $2
\log {\N} \approx 20$. Consequently, when the $\mbox{SNR}=20$
we see that both DS and non-adaptive measurements are highly
successful, as expected. Another SNR level of interest is $8$,
since in this case this happens to approximately satisfy the
condition $\mu = \sqrt{2/c_1} = \sqrt{2{\N}/R_1}$, which
according to the Theorem~\ref{thm:ds} is a critical level for
detection using DS. The simulations show that DS remains highly
successful at this level while the non-adaptive results are
poor. Finally, when the $\mbox{SNR}=2$, we see that DS still
yields useful results. For example, at $\mbox{FDP}= 0.05$, the
DS procedure has an average NDP of roughly $80\%$ (i.e., $20\%$
of the true components are still detected, on average).  This
demonstrates the approximate $\log {\N}$ extension of the SNR
range provided by DS.  Note the gap in the FDP values of the DS
results (roughly from $0.75$ to $1$). The gap arises because
the the output of DS has a higher SNR and is much less sparse
than the original signal, and so arbitrarily large FDP values
cannot be achieved by any choice of threshold. Large FDP values
are, of course, of little interest in practice.  We also remark
on the structured patterns observed in cases of high NDP and
low FDP (in upper left of figures for $\mbox{SNR}=2$ and
$\mbox{SNR}=8$). The visually structured `curves' of NDP-FDP
pairs arise when the total number of discoveries is small, and
hence the FDP values are restricted to certain rational
numbers.  For example, if just $3$ components are discovered,
then the number of false-discoveries can only take the values
$0,$ $1/3,$ $2/3,$ and $1.$

\begin{figure}[t]
\centering
\subfigure[]{
\includegraphics[width = 6.5cm]{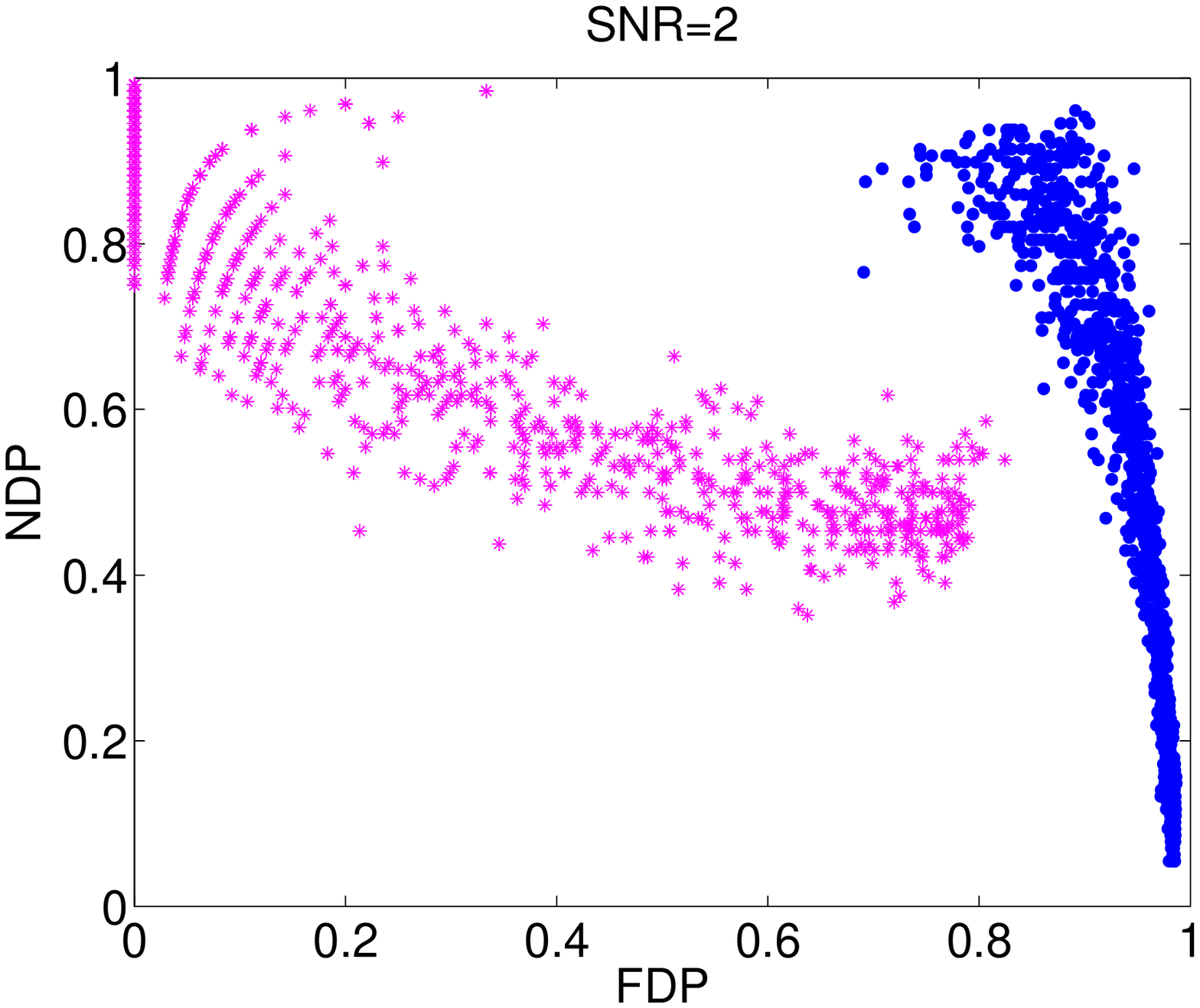}
}\hspace{1em}
\subfigure[]{
\includegraphics[width = 6.5cm]{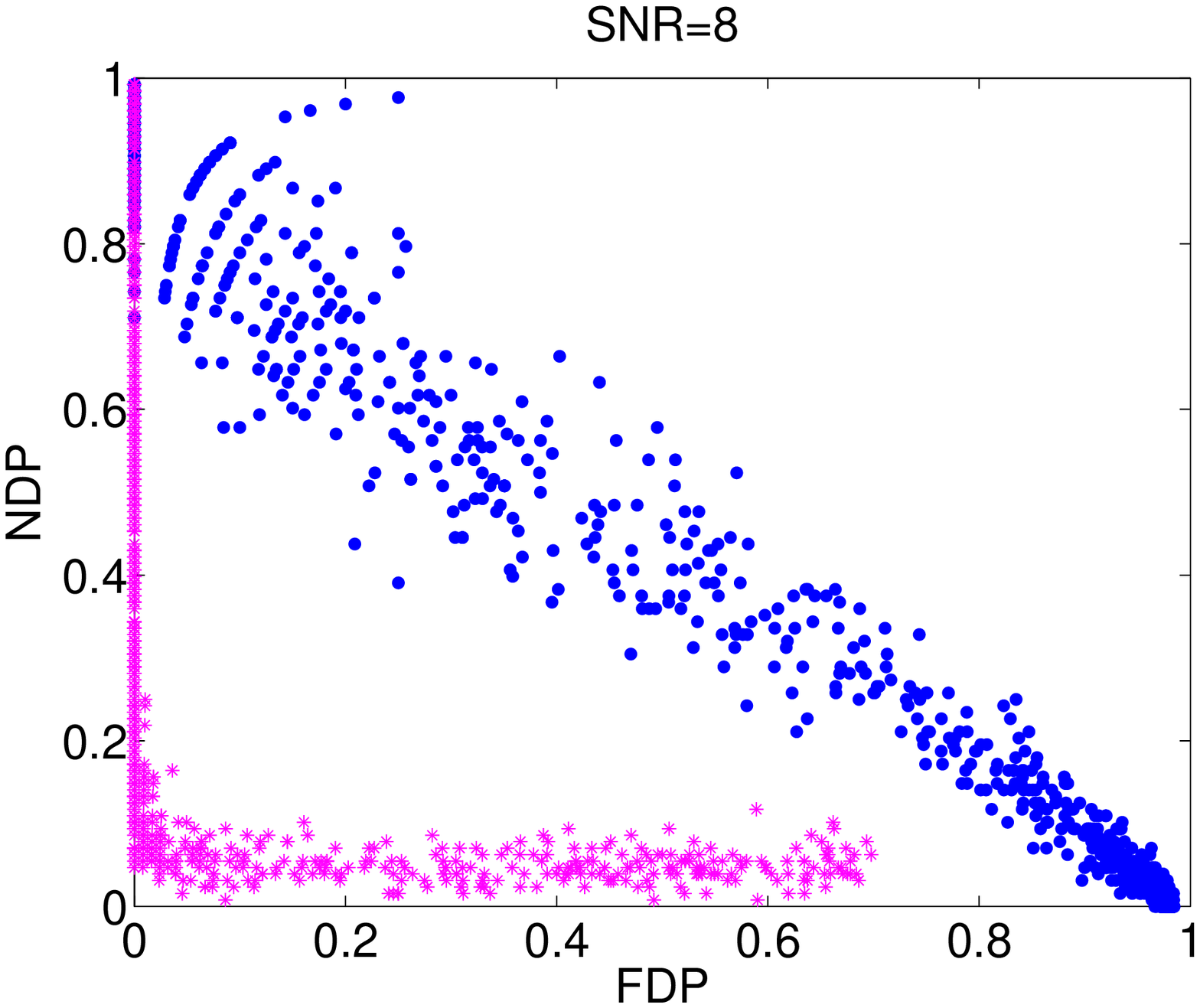}
}\\
\subfigure[]{
\includegraphics[width = 6.5cm]{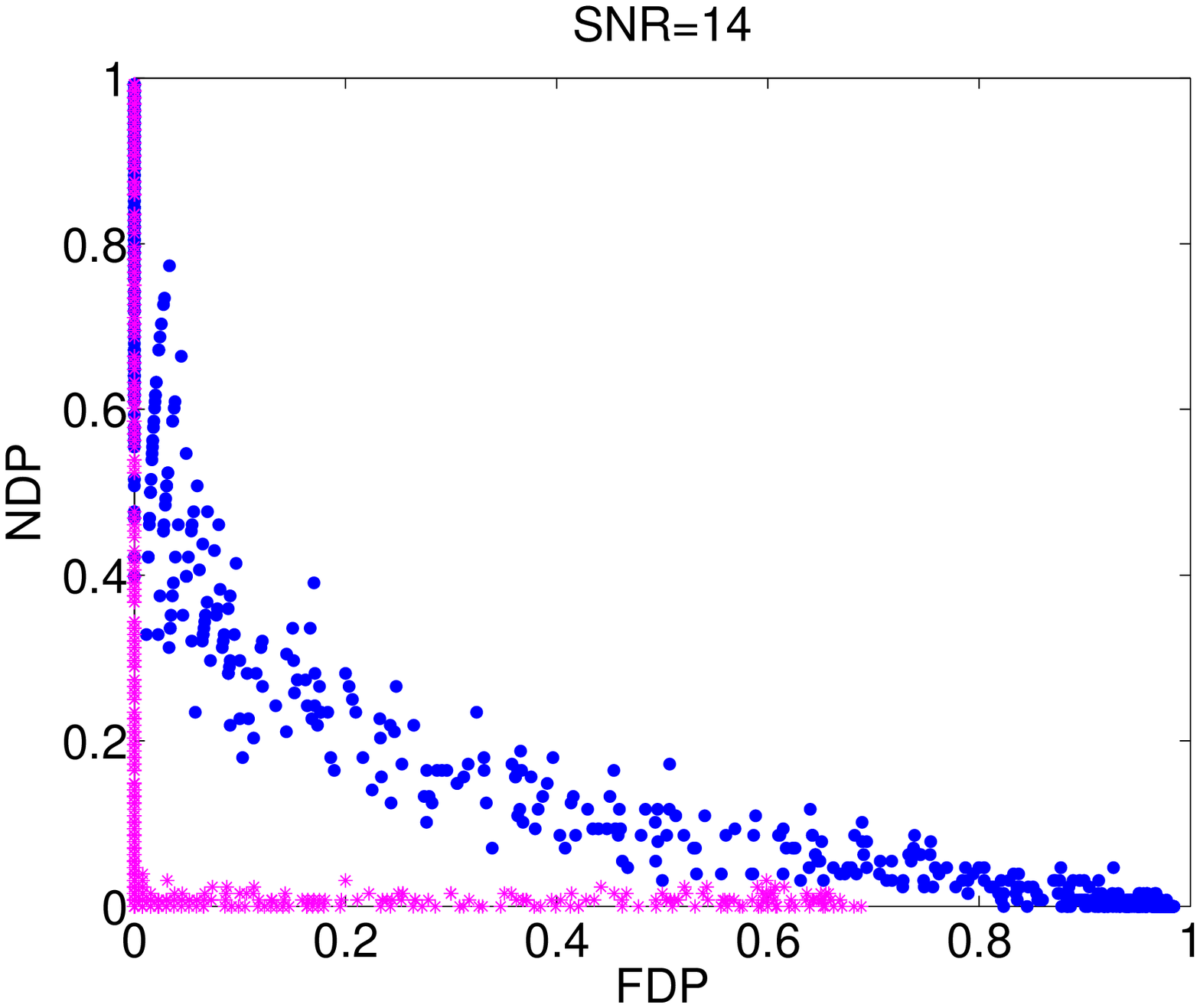}
}\hspace{1em}
\subfigure[]{
\includegraphics[width = 6.5cm]{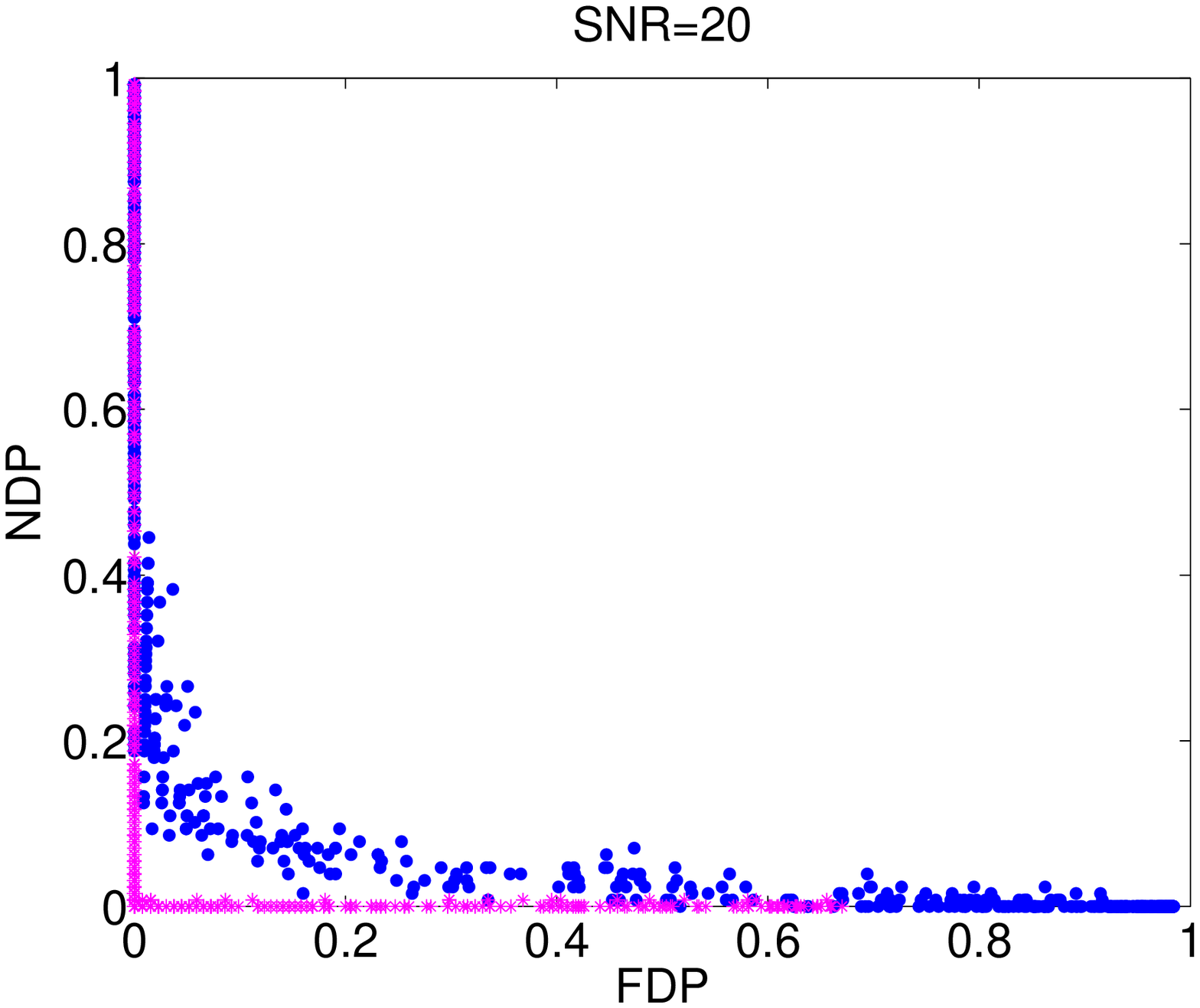}
}
\caption{FDP and NDP performance for DS (indicated with \textcolor{magenta}{$\ast$})
and non-adaptive sensing (indicated with \textcolor{blue}{$\bullet$}) at different SNRs.
Smaller values of FDP and NDP correspond to more accurate recovery (ie, exact support recovery
occurs when NDP $=$ FDP $=0$). The results clearly show that DS outperforms non-adaptive sensing
for each SNR examined.}
\label{fig:DS_results}
\end{figure}

Figure~\ref{fig:DS_FDRvSNR} compares the performance of
non-adaptive sensing and the DS procedure in terms of the
false-discovery rate (FDR) and the non-discovery rate (NDR),
which are the average FDP and NDP, respectively. We consider
three different cases, corresponding to signals of length $\N =
2^{14}, 2^{17},$ and $2^{20}$, (the solid, dashed, and dash-dot
lines, respectively) where for each case the number of non-zero
signal components is $\lfloor \N^{1/2} \rfloor$. The precision
allocation and number of observation steps are chosen as
described above (here, $k=6$ for each of the three cases). For
each value of SNR, $500$ independent experiments were performed
for DS and non-adaptive sampling, and in each, thresholds were
selected so that the FDRs were fixed at approximately $0.05$.
The resulting average FDRs and NDRs for each SNR level are
shown. The results show that not only does DS achieve
significantly lower NDRs than non-adaptive sampling over the
entire SNR range, its performance also exhibits much less
dependence on the signal dimension $\N$.

\begin{figure}[t]
\centering
\includegraphics[width=9cm]{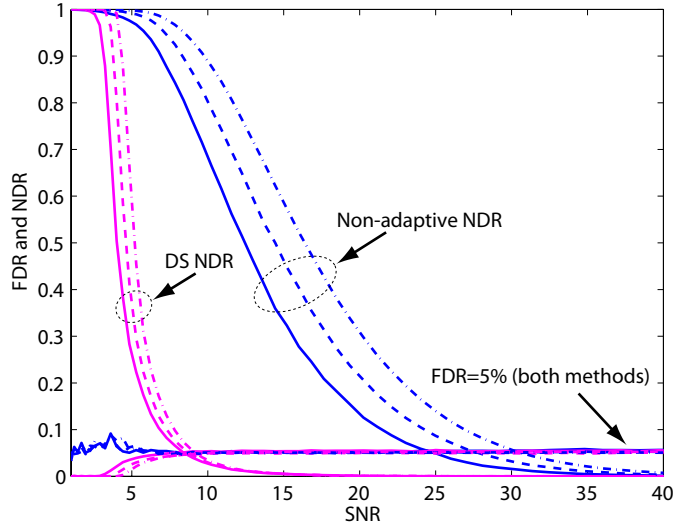}
\caption{FDR and NDR vs. SNR comparison. The solid, dashed, and dash-dot lines
correspond to signals of length $\N=2^{14}, 2^{17}$, and $2^{20}$, respectively, having
$\lfloor \N^{1/2} \rfloor$ non-zero entries. At each value of SNR and for each method
(DS and non-adaptive sampling), thresholds were selected to achieve FDR $= 0.05$. Lower
values of NDR correspond to more accurate recovery; DS clearly outperforms non-adaptive sensing
over the entire SNR range and shows much less dependence on the signal dimension $\N$.}
\vspace{-1em}
\label{fig:DS_FDRvSNR}
\end{figure}

\section{Concluding Remarks}\label{sec:conclusion}

There has been a tremendous interest in high-dimensional
testing and detection problems in recent years.  A
well-developed theory exists for such problems when using a
single, non-adaptive observation model
\cite{abramovich:06,donoho:06,ingster:97,ingster:98,donoho:04}.
However, in practice and theory, multistage adaptive designs
have shown promise
\cite{muller:07,zehetmayer:05,satagopan:03,zehetmayer:08}. This
paper quantifies the improvements such methods can achieve.  We
proposed and analyzed a specific multistage design called
Distilled Sensing (DS), and established that DS is capable of
detecting and localizing much weaker sparse signals than
non-adaptive methods.  The main result shows that adaptivity
allows reliable detection and localization at a signal-to-noise
ratio (SNR) that is roughly $\log \N$ lower than the minimum
required by non-adaptive methods, where $\N$ is the problem
dimension.  To put this in context, suppose one is interested
in screening $\N=20,000$ genes, then $\log \N \approx 10$.
Thus, the gains can be quite significant in problem sizes of
practical interest, which is why experimentalists often do
employ similar methods.

An additional point worthy of future investigation is the
development of lower bounds, characterizing the minimum
amplitude $\mu(\N)$ below which signal detection and
localization are impossible for \emph{any} sensing procedure
(including adaptive sensing). In general, lower bounds are
difficult to devise for sequential experimental design
settings, with a few notable exceptions \cite{korostelev:99,
castro:08}. Here, our results establish that significant
improvements are achievable using adaptivity, although we
relegate any general claims of optimality for adaptive sensing
procedures to future work.

There are several possible extensions to DS. One is to consider
even sparser signal models, where the number of nonzero entries
is significantly smaller than $\N^{1-\beta}$ for
$\beta\in(0,1)$, as considered here. In particular, the same
asymptotic results stated here follow also for signals whose
sparsity levels are as small as a constant times
$\log\log\log{\N}$. Indeed, making this choice of $s_1$ in
\eqref{eqn:epsilon_bound} leads to the same bound on the
$\epsilon_j^2$ given in \eqref{eqn:bound_on_epsilon}, and this
choice is also sufficient to ensure that \eqref{eqn:Gamma_bnd}
holds as well. In addition, for this choice of $s_1$ the same
bound is obtained in \eqref{eqn:size_of_I_k}, and the rest of
the proof goes through as stated. Another extension is to use
DS with alternate measurement models. For example, each
measurement could be a linear combination of the entries of
$x$, rather than direct measurements of individual components.
If the linear combinations are non-adaptive, this leads to a
regression model commonly studied in the Lasso and Compressed
Sensing literature---see, for example, \cite{tibshirani:96,
candes:06}. However, sequentially tuning the linear
combinations leads to an adaptive version of the regression
model which can be shown to provide significant improvements,
as well \cite{haupt:09b}.

\appendices

\section{Thresholds for Non-Adaptive Recovery}\label{sec:naloc}

In this section we give a proof of Theorem~\ref{thm:naloc}. We
will proceed by considering two cases separately: (i) $r>\beta$
and (ii) $r < \beta$. The analysis of the phase transition
point $r=\beta$ is interesting, but it is beyond the scope of
this paper. Begin by noticing that in the setting of the
theorem the minimax optimal support estimation procedure to
control the false and non-discovery proportions is a simple
coordinate-wise thresholding procedure of the form
\begin{equation*}
\widehat{\cal S}=\{i:y_i>\tau\}\ ,
\end{equation*}
where $\tau\geq 0$ can be chosen appropriately.  A formal proof
of this optimality can be done by noting that the class of
hypothesis is invariant under permutations (see
\cite{ingster:97,ingster:98} for details).

\textbf{Case (i) $r>\beta$:} In this case the signal support
can be accurately identified from the observations, in the
sense that $\mathrm{FDP}(\widehat{\cal S})$ and
$\mathrm{NDP}(\widehat{\cal S})$ both converge in probability
to zero. For this case we will take $\tau =
\tau({\N})=\sqrt{2\alpha\log {\N}}$, where $\beta<\alpha<r$.

Begin by defining $D_z$ and $M_s$ to be the number of retained
non-signal components and the number of missed signal
components, respectively. Formally
\begin{equation*}
  D_z=\sum_{i=1}^{\N} \1_{\{y_i>\tau, \ x_i=0\}}\ ,
\end{equation*}
and
\begin{equation*}
  M_s=\sum_{i=1}^{\N} \1_{\{y_i\leq\tau, \ x_i\neq 0\}}\ .
\end{equation*}
Note that $D_z$ is binomially distributed, that is $D_z \sim
\mathrm{Bin}({\N}(1-{\N}^{-\beta}),q_z)$, where
$q_z=\Pr(y_i>\tau)$ when $i$ is such that $x_i=0$. By noticing
that $\tau>0$ and using a standard Gaussian tail bound we have
that
\begin{equation*}
  q_z\leq \frac{1}{\sqrt{2\pi\tau^2}} \exp\left(-\frac{\tau^2}{2}\right)=\frac{1}{\sqrt{4\pi\alpha \log {\N}}}
  {\N}^{-\alpha} .
\end{equation*}

In a similar fashion note that $M_s \sim
\mathrm{Bin}({\N}^{1-\beta},q_s)$, where $q_s=\Pr(y_i\leq
\tau)$ when $i$ is such that $x_i=\sqrt{2r\log {\N}}$. Let
$Z\sim \mathcal{N}(0,1)$ be an auxiliary random variable. Then
\begin{eqnarray*}
  q_s&=&\Pr(Z+\sqrt{2r\log {\N}} \leq \tau)\\
  &=&\Pr(Z \leq \tau-\sqrt{2r\log {\N}})\\
  &=&\Pr(Z > \sqrt{2\log {\N}}(\sqrt{r}-\sqrt{\alpha}))\ ,
\end{eqnarray*}
And so, using the Gaussian tail bound we have
\begin{equation*}
  q_s \leq \frac{1}{\sqrt{4\pi \log {\N}}(\sqrt{r}-\sqrt{\alpha})} {\N}^{-(\sqrt{r}-\sqrt{\alpha})^2}\ .
\end{equation*}

We are ready to show that both $\mathrm{FDP}(\widehat{\cal S})$
and $\mathrm{NDP}(\widehat{\cal S})$ converge in probability to
zero. Begin by noticing that $\mathrm{NDP}(\widehat{\cal
S})=M_s/{\N}^{1-\beta}$. By definition
$\mathrm{NDP}(\widehat{\cal S}) \stackrel{\tiny P}{\rightarrow}
0$  means that for all fixed $\epsilon>0$,
\begin{equation*}
  \Pr(|\mathrm{NDP}(\widehat{\cal S})|>\epsilon)\rightarrow 0\ ,
\end{equation*}
as ${\N} \rightarrow \infty$. Noting that
$\mathrm{NDP}(\widehat{\cal S})$ is non-negative, this can be
easily established using Markov's inequality.
\begin{eqnarray*}
  \Pr(\mathrm{NDP}(\widehat{\cal S})>\epsilon)&=&\Pr\left(\frac{M_s}{{\N}^{1-\beta}}\geq \epsilon\right)\\
  &=&\Pr(M_s> \epsilon {\N}^{1-\beta})\\
  &\leq& \frac{\E[M_s]}{\epsilon {\N}^{1-\beta}}\\
  &=&\frac{{\N}^{1-\beta}q_s}{\epsilon {\N}^{1-\beta}}=\frac{q_s}{\epsilon}\rightarrow 0\ ,
\end{eqnarray*}
as ${\N}\rightarrow\infty$ as clearly $q_s$ converges to zero
(since $r>\alpha$). For the false discovery proportion the
reasoning is similar. Note that the number of correct
discoveries is ${\N}^{1-\beta}-M_s$. Taking this into account
we have
\begin{equation*}
  \mathrm{FDP}(\widehat{\cal S})=\frac{D_z}{{\N}^{1-\beta}-M_s+D_z}\ .
\end{equation*}
Let $\epsilon>0$. Then
\begin{eqnarray*}
  \lefteqn{ \Pr(\mathrm{FDP}(\widehat{\cal S})>\epsilon)}&&\\
  &=& \Pr\left(\frac{D_z}{{\N}^{1-\beta}-M_s+D_z}> \epsilon\right)\\
  &=&\Pr\left((1-\epsilon)\frac{D_z}{{\N}^{1-\beta}} + \epsilon \frac{M_s}{{\N}^{1-\beta}} > \epsilon\right)\\
  &\leq&\frac{\E\left[(1-\epsilon)\frac{D_z}{{\N}^{1-\beta}} + \epsilon \frac{M_s}{{\N}^{1-\beta}}\right]}{\epsilon}\\
  &=&\frac{1-\epsilon}{\epsilon}\frac{{\N}(1-{\N}^{-\beta})q_z}{{\N}^{1-\beta}} + q_s\\
  &\leq&\frac{1-\epsilon}{\epsilon}{\N}^\beta(1-{\N}^{-\beta}) \frac{1}{\sqrt{4\pi\alpha \log {\N}}} {\N}^{-\alpha}
  + \frac{1}{\sqrt{4\pi \log {\N}}(\sqrt{r}-\sqrt{\alpha})} {\N}^{-(\sqrt{r}-\sqrt{\alpha})^2}\ ,
\end{eqnarray*}
where the last line clearly converges to zero as
${\N}\rightarrow \infty$, since $\beta <\alpha < r$. Therefore
we conclude that $\mathrm{FDP}(\widehat{\cal S})$ converges to
zero in probability.

\textbf{Case (ii) $r < \beta$:} In this case we will show that
no thresholding procedure can simultaneously control the false
and non-discovery proportions. Begin by noting that the smaller
$\tau$ is, the easier it is to control the non-discovery
proportion. In what follows we will identify an upper-bound on
$\tau$ necessary for the control of the non-discovery rate.
Note that if $\tau=\tau({\N})=\sqrt{2r\log {\N}}$ then
$q_s=1/2$, and therefore
\begin{equation*}
  \mathrm{NDP}(\widehat{\cal S})=\frac{M_s}{\N^{1-\beta}}\stackrel{\tiny a.s.}{\rightarrow} 1/2\ ,
\end{equation*}
as ${\N}\rightarrow\infty$, by the law of large numbers.
Therefore a necessary, albeit insufficient, condition for
$\mathrm{NDP}(\widehat{\cal S}) \stackrel{\tiny P}{\rightarrow}
0$ is that for all but finitely many ${\N}$
\begin{equation}\label{eqn:upper_bound_tau}
  \tau < \sqrt{2r\log {\N}}\ .
\end{equation}

Similarly, note that the larger $\tau$ is, the easier it is to
control the false discovery rate. In the same spirit of the
above derivation we will identify a lower-bound for $\tau$ that
must necessarily hold in order to control the false-discovery
rate. Recall the previous derivation, where we showed that, for
any $\epsilon>0$
\begin{eqnarray*}
  \Pr(\mathrm{FDP}(\widehat{\cal S})>\epsilon)&=&\Pr\left((1-\epsilon)\frac{D_z}{{\N}^{1-\beta}} + \epsilon
  \frac{M_s}{{\N}^{1-\beta}} \geq \epsilon\right)\\
  &\geq&\Pr\left((1-\epsilon)\frac{D_z}{{\N}^{1-\beta}} \geq \epsilon\right)\\
  &=&\Pr\left(\frac{D_z}{{\N}^{1-\beta}} \geq \frac{\epsilon}{1-\epsilon}\right)\ ,
\end{eqnarray*}
where the last inequality follows trivially given that $M_s\geq
0$ and, without loss of generality, we assume that
$\epsilon<1$. This means that $\mathrm{FDP}(\widehat{\cal S})$
converges in probability to zero only if
$\frac{D_z}{{\N}^{1-\beta}}$ also converges in probability to
zero. Namely, for any $\epsilon>0$ we must have
$\lim_{{\N}\rightarrow \infty}
\Pr(D_z/{\N}^{1-\beta}>\epsilon)=0$. In what follows take
$\tau=\sqrt{2r\log {\N}}$. Let $\epsilon>0$ and note that
\begin{eqnarray*}
  \Pr\left(\frac{D_z}{{\N}^{1-\beta}}>\epsilon\right)&=&\Pr(D_z>\epsilon {\N}^{1-\beta})\\
  &=&\Pr(D_z-\E[D_z]>\epsilon {\N}^{1-\beta}-\E[D_z])\\
  &=&\Pr(D_z-\E[D_z]>\epsilon {\N}^{1-\beta}-{\N}(1-{\N}^{-\beta})q_z)\ .
\end{eqnarray*}
Define $a=\epsilon {\N}^{1-\beta}-{\N}(1-{\N}^{-\beta})q_z$.
Note that by the Gaussian tail bound, we have
\begin{equation*}
  \frac{1}{\sqrt{4\pi r \log {\N}}} \left(1-\frac{1}{2r\log {\N}}\right){\N}^{-r}
  \leq q_z \leq \frac{1}{\sqrt{4\pi r \log {\N}}} {\N}^{-r}\ ,
\end{equation*}
or equivalently,
\begin{equation*}
  q_z = \frac{1-o(1)}{\sqrt{4\pi r \log {\N}}}  {\N}^{-r} .
\end{equation*}
Given this it is straightforward to see that
\begin{eqnarray*}
  a &=& \epsilon {\N}^{1-\beta}-(1-o(1))\frac{{\N}(1-{\N}^{-\beta})}{\sqrt{4\pi r \log {\N}}}  {\N}^{-r}\\
  &=& \epsilon {\N}^{1-\beta}-(1-o(1))\frac{{\N}^{1-r}}{\sqrt{4\pi r \log {\N}}} \\
  &=& {\N}^{1-r}\left(\epsilon {\N}^{r-\beta}-\frac{1-o(1)}{\sqrt{4\pi r \log {\N}}}\right) \\
  &=& -\frac{{\N}^{1-r}}{\sqrt{4\pi r \log {\N}}}(1-o(1))\ ,
\end{eqnarray*}
where in the last step we use the assumption that $\beta>r$.
Therefore $a \rightarrow -\infty$ as ${\N}$ goes to infinity.
Let ${\N}_0(\epsilon)\in\mathbb{N}$ be such that $a <0$ for all
${\N}\geq {\N}_0(\epsilon)$. Then
\begin{eqnarray*}
  \Pr(D_z/{\N}^{1-\beta}>\epsilon)&=&\Pr(D_z-\E[D_z]>a)\\
  &=& 1-\Pr(D_z-\E[D_z]\leq a)\\
  &\geq& 1-\Pr(|D_z-\E[D_z]|\geq -a)\\
  &\geq& 1-\frac{\mathrm{Var}(D_z)}{(-a)^2}\ ,
\end{eqnarray*}
where $\mathrm{Var}(D_z)={\N}(1-{\N}^{-\beta})q_z(1-q_z)$ is
the variance of $D_z$ and the last step uses Chebyshev's
inequality. Recalling that ${\N}\geq {\N}_0(\epsilon)$ we can
examine the last term in the above expression easily.
\begin{eqnarray*}
  1-\frac{\mathrm{Var}(D_z)}{(-a)^2} &=& 1-(1-q_z)\frac{{\N}(1-{\N}^{-\beta})q_z}{a^2}\\
  &=& 1-(1-o(1))\frac{{\N} q_z}{a^2}\\
  &=& 1-(1-o(1))\frac{{\N}^{1-r}}{\sqrt{4\pi r \log {\N}}}\frac{4\pi r \log {\N}}{{\N}^{2-2r}}\\
  &=& 1-(1-o(1))\frac{\sqrt{4\pi r \log {\N}}}{{\N}^{1-r}}\rightarrow 1\ ,
\end{eqnarray*}
as ${\N}\rightarrow\infty$. Therefore we conclude that, for
$\tau=\sqrt{2r\log {\N}}$, $D_z/{\N}^{1-\beta}$ does not
converge in probability to zero, and therefore
$\mathrm{FDP}(\widehat{\cal S})$ also does not converge to
zero.

The above result means that a necessary condition for the
convergence of $\mathrm{FDP}(\widehat{\cal S})$ to zero is that
for all but finitely many ${\N}$
\begin{equation*}
  \tau > \sqrt{2r\log {\N}}\ .
\end{equation*}
This, together with \eqref{eqn:upper_bound_tau} shows that
there is no thresholding procedure capable of controlling both
the false-discovery and non-discovery proportions when $r <
\beta$ as we wanted to show, concluding the proof.

\section{Auxiliary Material}

\begin{lemma}\label{lem:lim}
  Let $0\leq f({\N}) \leq 1/2$ and $g({\N})\geq 0$ be any
  sequences in ${\N}$ such that $\lim_{{\N}\rightarrow\infty}
  f({\N}) g({\N}) = 0$. Then
  \begin{equation*}
    \lim_{{\N}\rightarrow\infty} \left(1+ f({\N})\right)^{g({\N})} =
    \lim_{{\N}\rightarrow\infty} \left(1- f({\N})\right)^{g({\N})}=1 \ .
  \end{equation*}
\end{lemma}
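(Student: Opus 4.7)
The plan is to reduce the statement to a limit of a product by taking logarithms. For any sequence $a_p > 0$, the identity $a_p^{g(p)} = \exp(g(p) \log a_p)$ converts the problem of showing $a_p^{g(p)} \to 1$ into showing that $g(p) \log a_p \to 0$, after which continuity of $\exp$ at $0$ finishes the argument.

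First I would dispose of the degenerate case in which $f(p) = 0$ along a subsequence or $g(p) = 0$, since there the base equals $1$ or the exponent kills everything. For the remaining $p$, I would apply elementary bounds on $\log(1 \pm x)$ valid on $[0, 1/2]$. Specifically, using the Taylor expansion $\log(1+x) = x - x^2/2 + \dots$, one has $0 \leq \log(1+x) \leq x$ for $x \in [0, 1/2]$, and $-2x \leq \log(1-x) \leq 0$ on the same interval (the constant $2$ absorbs the higher-order tail, since $\sum_{n \geq 1} x^{n-1}/n \leq 2$ for $x \leq 1/2$). Applied to $x = f(p)$, these give the uniform estimate
\begin{equation*}
|\log(1 \pm f(p))| \leq 2 f(p).
\end{equation*}

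Multiplying by $g(p) \geq 0$ yields
\begin{equation*}
|g(p) \log(1 \pm f(p))| \leq 2 f(p) g(p),
\end{equation*}
and the right-hand side tends to $0$ by hypothesis. Hence $g(p) \log(1 \pm f(p)) \to 0$, so by continuity of the exponential,
\begin{equation*}
(1 \pm f(p))^{g(p)} = \exp(g(p) \log(1 \pm f(p))) \to \exp(0) = 1,
\end{equation*}
which is the claim.

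There is essentially no obstacle here; the only mild care is in verifying the explicit constant in the logarithm bound on $[0,1/2]$ (it is not quite $|x|$ for $\log(1-x)$, but any finite multiple suffices because we only need the product with $g(p)$ to vanish). Everything else is routine.
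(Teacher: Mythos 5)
Your proof is correct and follows essentially the same route as the paper: take logarithms, bound $g(\N)\log(1\pm f(\N))$ in absolute value by a constant times $f(\N)g(\N)$ using $\log(1+x)\leq x$ and $\log(1-x)\geq -2x$ on $[0,1/2]$, and conclude by continuity of the exponential. The only cosmetic difference is that you justify the lower bound on $\log(1-x)$ via the Taylor series while the paper writes $\log(1-f)=-\log\left(1+\tfrac{f}{1-f}\right)\geq -\tfrac{f}{1-f}\geq -2f$; both yield the same estimate.
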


\begin{proof}
  To establish that
  $\lim_{{\N}\rightarrow\infty}(1+f({\N}))^{g({\N})}=1$ note
  that
  \begin{equation*}
    1\leq (1+f({\N}))^{g({\N})}=\exp\left(g({\N})
    \log(1+f({\N}))\right)\leq \exp\left(g({\N})f({\N})\right)\ ,
  \end{equation*}
  where the last inequality follows from $\log(1+x) \leq x$ for
  all $x\geq 0$. As $g({\N})f({\N})\rightarrow 0$ we conclude
  that $\lim_{{\N}\rightarrow\infty}(1+f({\N}))^{g({\N})}=1$.

  The second part of the result is established in a similar
  fashion. Note that
  \begin{eqnarray*}
    \log\left(1-f({\N})\right)=
    -\log{\left(\frac{1}{1-f({\N})}\right)}
    &=&\log{\left(1+\frac{f({\N})}{1-f({\N})}\right)}\\
    &\geq& -\frac{f({\N})}{1-f({\N})} \geq -2f({\N}) \,
  \end{eqnarray*}
  where the last inequality relies on the fact that
  $f({\N})\leq 1/2$. Using this fact we have that
  \begin{equation*}
    1\geq (1-f({\N}))^{g({\N})}=
    \exp\left(g({\N})\log(1-f({\N}))\right)\geq \exp\left(-2f({\N})g({\N})\right)\ .
  \end{equation*}
  Taking into account that $g({\N})f({\N})\rightarrow 0$
  establishes the desired result.
\end{proof}

\begin{lemma}\label{lem:fractions}
  Let $k=k({\N})$ be a positive integer sequence in ${\N}$,
  and let $g=g({\N})$ be a positive nondecreasing sequence in
  ${\N}$. For some fixed $a>1$ let
  $\epsilon_j=\epsilon_j({\N}) \leq a^{-j}/g({\N})$. If
  $g({\N}) > a^{-1}(1 + \eta)$, for some fixed $\eta>0$, then
  \begin{equation*}
    \lim_{{\N}\rightarrow \infty} \prod_{j=1}^{k({\N})}
    \left(1-\epsilon_j({\N})\right) > 0.
  \end{equation*}
  If, in addition, $g({\N})$ is any positive monotone diverging sequence in ${\N}$, then
  \begin{equation*}
    \lim_{{\N}\rightarrow \infty} \prod_{j=1}^{k({\N})}
    \left(1-\epsilon_j({\N})\right) = 1.
  \end{equation*}
\end{lemma}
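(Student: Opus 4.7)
The plan is to pass to the logarithm and reduce both statements to controlling a convergent geometric series. Writing
\[
\log \prod_{j=1}^{k(\N)} (1-\epsilon_j(\N)) \ = \ \sum_{j=1}^{k(\N)} \log(1-\epsilon_j(\N)),
\]
this sum is finite provided each $\epsilon_j(\N)<1$. Combining the hypothesis $\epsilon_j(\N)\leq a^{-j}/g(\N)$ with the standing bound $g(\N) > (1+\eta)/a$ gives
\[
\epsilon_j(\N) \ \leq \ \frac{a^{-(j-1)}}{1+\eta} \ \leq \ \frac{1}{1+\eta} \ < \ 1
\]
uniformly in $j\geq 1$ and $\N$, and in particular $1-\epsilon_j(\N) \geq \eta/(1+\eta)$.

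Next I would invoke the elementary inequality $\log(1-x) \geq -x/(1-x)$, valid for $x\in[0,1)$, together with the uniform lower bound on $1-\epsilon_j(\N)$ just established, to get
\[
\sum_{j=1}^{k(\N)} \log(1-\epsilon_j(\N)) \ \geq \ -\frac{1+\eta}{\eta}\sum_{j=1}^{k(\N)} \epsilon_j(\N) \ \geq \ -\frac{1+\eta}{\eta\,g(\N)}\sum_{j=1}^{\infty} a^{-j} \ = \ -\frac{1+\eta}{\eta(a-1)\,g(\N)}.
\]
Exponentiating yields the two-sided bound
\[
\exp\!\left(-\frac{1+\eta}{\eta(a-1)\,g(\N)}\right) \ \leq \ \prod_{j=1}^{k(\N)} (1-\epsilon_j(\N)) \ \leq \ 1,
\]
where the upper bound is trivial since each factor lies in $(0,1]$. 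Note that this estimate holds uniformly in the length $k(\N)$; no control on $k(\N)$ beyond positivity is ever used.

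The two conclusions follow immediately. For the first, the hypothesis $g(\N) > (1+\eta)/a$ bounds $g(\N)$ below by a positive constant, so the lower bound above is bounded below by a fixed positive constant, giving a strictly positive liminf of the product. For the second, if additionally $g(\N)\to\infty$, the exponent tends to $0$, the lower bound tends to $1$, and the squeeze gives the limit $1$. The only subtle point is ensuring $\epsilon_j(\N)<1$ uniformly so that the logarithm and the bound $\log(1-x)\geq -x/(1-x)$ are legitimate throughout; this is precisely what the standing hypothesis $g(\N)>a^{-1}(1+\eta)$ buys us at $j=1$, and it then propagates to all $j\geq 1$ since $a^{-j}/g(\N)$ is decreasing in $j$. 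Beyond this, the argument is just a geometric-series estimate.
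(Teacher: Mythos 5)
Your proof is correct and follows essentially the same route as the paper's: pass to logarithms, apply $\log(1-x)\geq -x/(1-x)$, uniformize the denominator (you via the bound $1-\epsilon_j\geq \eta/(1+\eta)$, the paper via $1-a^{-j}/g(\N)\geq 1-a^{-1}/g(\N)$), sum the geometric series, and exponentiate to squeeze the product. The only differences are cosmetic constants, so nothing further is needed.
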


\begin{proof}
  Note that
  \begin{eqnarray*}
    \log{\left(\prod_{j=1}^{k({\N})} \left(1-\epsilon_j({\N})\right)\right)}
    &\geq& \sum_{j=1}^{k({\N})} \log{\left(1-\frac{a^{-j}}{g({\N})}\right)}\\
    &=& -\sum_{j=1}^{k({\N})} \log{\left(1+\frac{a^{-j}/g({\N})}{1-a^{-j}/g({\N})}\right)}\\
    &\geq& -\sum_{j=1}^{k({\N})} \frac{a^{-j}/g({\N})}{1-a^{-j}/g({\N})}\\
    &\geq& \frac{-1}{1-a^{-1}/g({\N})}\sum_{j=1}^{k({\N})} a^{-j}/g({\N})\\
    &=& \frac{-1}{g({\N})-a^{-1}}\sum_{j=1}^{k({\N})} a^{-j}.
  \end{eqnarray*}
  Now, using the formula for the sum of a geometric series, we
  have
  \begin{equation*}
    \log{\left(\prod_{j=1}^{k({\N})}
    \left(1-\epsilon_j({\N})\right)\right)} \geq \frac{-1}{g({\N})-a^{-1}}
    \left[\frac{a^{-1}(1-a^{-k({\N})})}{1-a^{-1}}\right],
  \end{equation*}
  from which it follows that
  \begin{equation*}
    \prod_{j=1}^{k({\N})}\left(1-\epsilon_j({\N})\right) \geq \exp{\left(\frac{-1}{g({\N})-a^{-1}} \left[\frac{a^{-1}(1-a^{-k({\N})})}{1-a^{-1}}\right]\right)} \ .
  \end{equation*}

  Now, assuming only that $g({\N}) > a^{-1}(1 +\eta)$, for some fixed $\eta>0$ it is easy
  to see that
  \begin{equation*}
    \lim_{{\N}\rightarrow \infty}\prod_{j=1}^{k({\N})}\left(1-\epsilon_j({\N})\right) > 0,
  \end{equation*}
  and if $g({\N})\rightarrow \infty$ as $\N\rightarrow\infty$ we have
  \begin{equation*}
    \lim_{{\N}\rightarrow \infty} \prod_{j=1}^{k({\N})}\left(1-\epsilon_j({\N})\right) = 1,
  \end{equation*}
  as claimed.
\end{proof}

\bibliographystyle{IEEEtran}
\bibliography{DS_rev_bib}

\end{document}